\documentclass[12pt]{article}

\usepackage[T1]{fontenc}
\usepackage[utf8]{inputenc}
\usepackage[english]{babel}
\usepackage{url}
\usepackage{graphicx}
\usepackage{longtable}
\usepackage[fleqn]{amsmath}
\usepackage{amsthm}
\usepackage{authblk}
\usepackage{tikz}
\usetikzlibrary{arrows.meta}
\usetikzlibrary{calc}
\usetikzlibrary{ decorations.markings}
\usetikzlibrary{positioning}
\usepackage{caption}
\usepackage{subcaption}

\newtheorem{theorem}{Theorem}

\newtheorem{claim}{Claim}

\newtheorem{conjecture}[theorem]{Conjecture}

\newtheorem{problem}[theorem]{Problem}

\DeclareMathOperator{\pc}{pc}
\DeclareMathOperator{\spc}{spc}
\DeclareMathOperator{\nrc}{nrc}
\DeclareMathOperator{\Pc}{\mathcal{P}-c}

\setlength{\topmargin}{-.3in}
\setlength{\textwidth}{6.5in}
\setlength{\textheight}{8.6in}
\setlength{\oddsidemargin}{0 in}

\title{Strongly proper connected coloring of graphs}
\author[1]{Micha{\l} D{\k e}bski\thanks{michal.debski87@gmail.com}}
\author[1]{Jaros{\l}aw Grytczuk \thanks{grytczuk@gmail.com (corresponding author) (Partially supported by Narodowe Centrum Nauki, grant 2020/37/B/ST1/03298.)}}
\author[1]{Pawe{\l} Naroski \thanks{pawel.naroski@pw.edu.pl}}
\author[1]{Ma\l gorzata \'{S}leszy\'{n}ska-Nowak\thanks{malgorzata.nowak@pw.edu.pl}}
\affil[1]{Faculty of Mathematics and Information Science, Warsaw University of Technology, Warsaw, Poland}

\begin{document}
	\maketitle
	\begin{abstract}
		We study a new variant of \emph{connected coloring} of graphs based on the concept of \emph{strong} edge coloring (every color class forms an \emph{induced} matching). In particular, an edge-colored path is \emph{strongly proper} if its color sequence does not contain identical terms within a distance of at most two. 
		A \emph{strong proper connected} coloring of $G$ is the one in which every pair of vertices is joined by at least one strongly proper path. Let $\spc(G)$ denote the least number of colors needed for such coloring of a graph $G$.
		
		We prove that the upper bound $\spc(G)\leq 5$ holds for any $2$-connected graph $G$. On the other hand, we demonstrate that there are $2$-connected graphs with arbitrarily large girth satisfying $\spc(G)\geq 4$. Additionally, we prove that graphs whose cycle lengths are divisible by $3$ satisfy $\spc(G)\leq 3$.
		
		We also consider briefly other connected colorings defined by various restrictions on color sequences of connecting paths. For instance, in a \emph{nonrepetitive connected coloring} of $G$, every pair of vertices should be joined by a path whose color sequence is \emph{nonrepetitive}, that is, it does not contain two adjacent identical blocks. We demonstrate that $2$-connected graphs are $15$-colorable while $4$-connected graphs are $6$-colorable, in the connected nonrepetitive sense.
		
		A similar conclusion with a finite upper bound on the number of colors holds for a much wider variety of connected colorings corresponding to fairly general properties of sequences. We end the paper with some open problems of concrete and general nature.
	\end{abstract}
	
	
	
	
	
	\section{Introduction}
	Let $G$ be a simple connected graph with colored edges. A path $P$ in $G$ is \emph{proper} if no two consecutive edges of $P$ have the same color. An edge coloring of $G$ is a \emph{proper connected coloring} if every pair of distinct vertices is joined by at least one proper path. The least number of colors needed for such a coloring of a graph $G$ is denoted by $\pc(G)$.
	
	In the proper edge coloring of a graph, in a traditional sense, each pair of edges with a common vertex is colored differently, so, every path is proper. Hence, by the well known theorem of Vizing \cite{Vizing}, $$\pc(G)\leq \chi'(G) \leq \Delta(G)+1.$$ It is also easy to see that for every tree $T$, we have $\pc(T)=\chi'(T)=\Delta(T)$. However, if $G$ is a $2$-connected graph\footnote[1]{A graph $G$ is \emph{$k$-connected} if it cannot be disconnected by deleting $k-1$ vertices. Similarly, $G$ is \emph{$k$-edge-connected} if it cannot be disconnected by deleting $k-1$ edges.}, then we already have $\pc(G)\leq 3$, and this is tight. This somewhat surprising fact was proved by Borozan, Fujita, Gerek, Magnant, Manoussakis, Montero, and Tuza in \cite{Borozan-FGMM-Tuza} (see also \cite{Li-M-Qin}), where the proper connected coloring was introduced, in analogy to the well studied topic of \emph{rainbow connected coloring}, invented by Chartrand, Johns, McKeon, and Zhang in \cite{Chartrand-JM-Zhang} (see \cite{Li-Sun}). In the later concept, as you can imagine, the point is that each pair of vertices should be connected by a \emph{rainbow} path, i.e., one in which no two edges have the same color. In a similar way one may investigate a variety of coloring concepts involving various restrictions on \emph{color patterns} allowable on connecting paths (see \cite{Brause-Jendrol-Schiermeyer}, \cite{Brause-Jendrol-Schiermeyer Arxiv}, \cite{Czap-Jendrol-Valiska}).
	
	In the present paper we consider a new variant of connected coloring, inspired by the concept of \emph{strong edge coloring} of graphs, invented by Fouquet and Joliviet \cite{Fouquet-Joliviet}, and, independently, by Erd\H{o}s and Ne\v{s}et\v{r}il \cite{Erdos-Nesetril}. In the strong edge coloring of a graph $G$ every color class should form an \emph{induced} matching, which means that not only every pair of incident edges is colored differently, but also every pair of edges incident to some other common edge should be differently colored (see \cite{Deng-Yu-Zhou} for a recent survey on this topic). In particular, in a strong edge coloring of a path any sub-path with at most three edges must by rainbow. We will call such paths \emph{strongly proper}. Analogously, an edge-colored graph is called \emph{strongly proper connected} if any two vertices are connected by at least one strongly proper path. The least number of colors needed for such a coloring of a graph $G$ is denoted by $\spc(G)$ and referred to as the \emph{strong proper connection number} of $G$. 
	
	In much the same way as for the traditional edge coloring, we have the trivial bound $\spc (G)\leq \chi'_s(G)$, where $\chi'_s(G)$ is the \emph{strong chromatic index} of $G$, defined naturally as the least number of colors needed for a strong edge coloring of $G$. However, this parameter is more mysterious than its classical archetype $\chi'(G)$. In particular, a long standing conjecture by Erd\H{o}s and Ne\v{s}et\v{r}il \cite{Erdos-Nesetril} states that $\chi'_s(G)\leq\frac{5}{4}\Delta(G)^2$. This bound is tight (if true), as is demonstrated by the family of blowups of $C_5$. Currently best general result, obtained by Hurley, de Joannis de Verclos, and Kang \cite{Hurley-deVerclos-Kang}, states that the bound $\chi'_s(G)\leq 1.772\Delta(G)^2$ holds for sufficiently large $\Delta(G)$ (see \cite{Deng-Yu-Zhou} for a survey of many other results towards the Erd\H{o}s-Ne\v{s}et\v{r}il conjecture).
	
	In this paper we prove a finite upper bound on $\spc(G)$ for $2$-connected graphs. Our main result reads as follows.
	\begin{theorem}	\label{theorem_ps2connected}
		Every $2$-connected graph $G$ satisfies $\spc(G) \leq 5$.
	\end{theorem}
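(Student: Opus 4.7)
My plan is to proceed by induction along an open ear decomposition of $G$. Every $2$-connected graph admits such a decomposition $G_0 \subset G_1 \subset \cdots \subset G_k = G$, where $G_0$ is a cycle and each $G_{i+1} = G_i \cup P_{i+1}$ is obtained by attaching an ear $P_{i+1}$ --- a path whose two endpoints lie in $G_i$ and whose interior vertices are new. The inductive goal is to color each $G_i$ with at most five colors so that it is strongly proper connected, extending the coloring one ear at a time.

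For the base case I would color the initial cycle $G_0$ of length $n$ periodically so that every three consecutive edges around the cycle receive pairwise distinct colors. When $3 \mid n$ the pattern $1,2,3,1,2,3,\dots$ suffices, and in the remaining residue classes a short adjustment near the wrap-around using the colors $4$ and $5$ handles the obstruction. Since every three consecutive edges are then pairwise distinct, every arc of $G_0$ is itself a strongly proper path, so $G_0$ is strongly proper connected.

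For the inductive step, consider an ear $P = u w_1 w_2 \cdots w_{\ell-1} v$. I would first color its interior edges with a periodic pattern such as $1,2,3,1,2,3,\dots$, which already makes $P$ itself a strongly proper path, so any two vertices on $P$ are strongly proper connected along $P$. The remaining work is to ensure that each new internal vertex $w_j$ is strongly proper connected to every old vertex $x \in V(G_i)$. The natural candidate is to concatenate an ear segment from $w_j$ to one of the endpoints, say $u$, with an already existing strongly proper $u$--$x$ path in $G_i$. Strong properness of such a concatenation can only fail at the junction: if the ear segment ends with colors $c_2, c_1$ and the continuation starts with colors $d_1, d_2$, a short check shows that the only extra constraints are $d_1 \notin \{c_1, c_2\}$ and $d_2 \neq c_1$. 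With five colors available this leaves at least three admissible values for each of $d_1$ and $d_2$, so the junction is always solvable provided $G_i$ already carries enough ``initial color flexibility'' at $u$. I therefore intend to maintain the strengthened inductive invariant that from every vertex $z \in V(G_i)$, strongly proper paths to every other vertex can be found whose first two edges avoid any prescribed small forbidden set (at most two colors forbidden on the first edge, at most one on the second). The two boundary colors of $P$ adjacent to $u$ and $v$ are then chosen from the full palette $\{1,\dots,5\}$ precisely to reinstate this invariant in $G_{i+1}$.

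The main obstacle I anticipate is the handling of \emph{short ears} of length $1$, $2$, or $3$: such ears have essentially no interior in which to place the periodic pattern, so their entire coloring must simultaneously satisfy the junction conditions at both endpoints $u$ and $v$ and preserve the inductive invariant for subsequent ears. Length-$2$ ears are the most delicate, since the single new internal vertex has exactly two incident edges and no ``spare'' edge to absorb a bad color. I expect the bulk of the case analysis to concentrate on ears of length at most three, with the role of the fifth color being precisely to absorb local conflicts around these short ears that cannot be resolved with three or four colors. It may also help to fix a particular ear decomposition (for instance one that prefers long ears first, or that groups runs of short ears) to make the invariant easier to re-establish at each step.
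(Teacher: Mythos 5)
Your framework coincides with the paper's: ear decomposition, induction ear by ear, periodic coloring of long ears, a longest-ear-first ordering, and the recognition that ears of length two and three are the crux. Your local junction analysis ($d_1\notin\{c_1,c_2\}$, $d_2\neq c_1$) is also correct. However, the inductive invariant you propose to maintain --- that from every vertex $z\in V(G_i)$ one can find strongly proper paths to every target whose first two edges avoid \emph{any} prescribed set of two colors on the first edge --- is not maintainable and this is a genuine gap, not a deferred technicality. The graphs $G_i$ (especially if one passes to a minimally $2$-connected subgraph, which one essentially must to control short ears) are full of degree-$2$ vertices; at such a vertex only two first-edge colors exist at all, so forbidding two colors can leave nothing, and even when one edge survives, there is no reason the unique path beginning with that particular edge is strongly proper all the way to an arbitrary target $x$. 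The flexibility you are asking for simply is not available locally, and the whole difficulty of the theorem is hidden in this unproved invariant.

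The paper resolves the junction problem by the opposite device: \emph{rigidity} rather than flexibility. All ears of length at least three are oriented and colored so that, at every vertex $v$, all outgoing edges share one color and all incoming edges share another, different color, together with a distance-two condition (its invariant (iv)) ensuring that an in-edge at $u$ and an out-edge at $v$ of any arc $uv$ differ. With these invariants a concatenation through a junction vertex is automatically strongly proper, with no choice needed. Ears of length two are excluded from the induction entirely and glued on at the very end; making this work requires several structural claims about the longest-first ear decomposition (endpoints of an ear are never adjacent in the earlier graph; a two- or three-edge ear cannot attach to the interior of another short ear except in one rigid configuration). If you want to salvage your plan, you should replace your flexibility invariant by something of this deterministic kind, and be prepared to prove combinatorial facts about where short ears can attach --- that is where essentially all of the work in the actual proof lies.
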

	
	We do not know if this upper bound is tight. Clearly, $\spc(G)\geq 3$ for any graph of diameter at least $3$. Curiously, it is not so easy to produce examples of $2$-connected graphs demanding four colors. However, we provide a general construction of a family of graphs with $\spc(G)=4$ and arbitrarily large girth. Moreover, this family contains infinitely many bipartite graphs.
	
	\begin{theorem}\label{theorem_4colors}
		For every $d\geq 3$, there exists a $2$-connected graph $G_d$ with girth at least $d$ such that $\spc(G_d) \geq 4$.
	\end{theorem}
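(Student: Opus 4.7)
The plan is to construct $G_d$ as a generalized theta graph between two hub vertices with many internally disjoint paths, all of a length divisible by $3$, and then to exploit the extreme rigidity of strongly proper colorings of paths that use only three colors. Concretely, fix an integer $N\geq 18$ and let $\ell$ be any positive multiple of $3$ with $\ell\geq d$. Define $G_d$ to consist of two vertices $u,v$ joined by $N$ internally disjoint $u$-$v$ paths $P_1,\dots,P_N$, each of length $\ell$. This graph is visibly $2$-connected; its only cycles are unions $P_i\cup P_j$ of length $2\ell$, so $G_d$ is bipartite and has girth $2\ell\geq 2d\geq d$.

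To prove $\spc(G_d)\geq 4$ I argue by contradiction: suppose some strongly proper connected $3$-coloring exists. For each path $P_i$ let $a_i,b_i$ denote the colors of the edges of $P_i$ incident to $u$ and to $v$. Since the pair $(a_i,b_i)$ takes at most nine values, pigeonhole furnishes a set $S$ of at least $N/9\geq 2$ paths on which this pair is constant; after relabeling colors we may assume $a_i=b_i=1$ for all $i\in S$. Fix two distinct indices $i,j\in S$ and consider the neighbors $x_{i,1},x_{j,1}$ of $u$ on $P_i,P_j$. Because every vertex other than $u,v$ has degree $2$, each simple $x_{i,1}$-$x_{j,1}$ path falls into exactly one of four topological types: (a) the length-$2$ path through $u$; (b) the length-$(2\ell-2)$ path $x_{i,1}\to P_i\to v\to P_j\to x_{j,1}$; or, for some $k\notin\{i,j\}$, the length-$2\ell$ detour (c) $x_{i,1}\to u\to P_k\to v\to P_j\to x_{j,1}$ or its mirror (d) $x_{i,1}\to P_i\to v\to P_k\to u\to x_{j,1}$.

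Paths (a) and (b) are instantly ruled out: in (a) both edges incident to $u$ carry color $1$, and in (b) the two edges incident to $v$ on $P_i$ and $P_j$ both carry color $1$. For a (c) or (d) detour to be strongly proper, the restriction of the coloring to $P_k$ must itself be strongly proper, and at each hub the triple formed with the color-$1$ edge forces the endpoint edge of $P_k$ at that hub to have color different from $1$. Now comes the key observation: with only three colors available, a strongly proper coloring of a path of length at least $3$ is rigid, being necessarily periodic of period exactly $3$ and using all three colors on every three consecutive edges. Combined with the junction condition at $u$, this forces the first two edges of $P_k$ into $\{2,3\}$ and its third edge to be $1$; periodicity then dictates that the edge at position $3m$ of $P_k$ is colored $1$ for every $m\geq 1$, and since $3 \mid \ell$ the edge of $P_k$ at position $\ell$, incident to the other hub, also carries color $1$, contradicting the junction condition at that hub.

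Hence no strongly proper $x_{i,1}$-$x_{j,1}$ path exists, contradicting the assumption, and so $\spc(G_d)\geq 4$. The main obstacle is the last step: one has to enumerate the four simple-path topologies carefully (verifying that these are really the only ones in this theta-like graph, using that every non-hub vertex has degree $2$) and to check that the period-$3$ rigidity blocks both detour types (c) and (d) simultaneously; the divisibility $3 \mid \ell$ is used exactly to make the boundary conditions at $u$ and at $v$ incompatible, which is why the construction must take $\ell$ divisible by $3$ rather than merely $\ell\geq d$.
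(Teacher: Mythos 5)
There is a fatal problem here, and it is not just in the proof --- the construction itself does not work: your generalized theta graph satisfies $\spc(G_d)=3$. Partition the paths into two nonempty groups $A$ and $B$; color each $P_m$ with $m\in A$ canonically from $u$ as $1,3,2,1,3,2,\dots$ and each $P_m$ with $m\in B$ as $2,3,1,2,3,1,\dots$. Since $3\mid\ell$, a path in $A$ read from $v$ towards $u$ is $2,3,1,2,3,1,\dots$ and a path in $B$ read from $v$ is $1,3,2,\dots$. One checks that for $x\in P_i$, $y\in P_j$ with $i,j\in A$ the walk $x\to u\to P_k\to v\to y$ with $k\in B$ has the color sequence $\dots,2,3,1\mid 2,3,1,\dots,2,3,1\mid 2,3,1,\dots$, i.e.\ it is a block of $(1,2,3,1,2,3,\dots)$ and hence strongly proper; the case $i,j\in B$ is symmetric via a detour through $A$, the mixed case uses the direct route through $u$, and all remaining pairs are trivial. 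So every pair of vertices is joined by a strongly proper path using only three colors.

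The place where your argument lets this coloring through is the pigeonhole step: relabeling gives $a_i=a_j$ and $b_i=b_j$ for two paths, but \emph{not} $a_i=b_i$, and your treatment of the detour types (c) and (d) uses $b_j=1$ essentially (the period-$3$ rigidity forces the last edge of $P_k$ at $v$ to have color $a_i$, which only conflicts with the edge of $P_j$ at $v$ if $b_j=a_i$). With $a_i=1$, $b_j=2$ and a detour path $P_k$ colored $2,3,1,\dots$ from $u$, the type (c) path is perfectly canonical. More fundamentally, making all path lengths congruent to $0$ modulo $3$ is exactly what permits the two-group coloring. The paper's construction avoids this by combining, in each of three theta-pieces, one path of length $3a+1$ and one of length $3b$ (incongruent mod $3$), and by chaining the three pieces cyclically with bridge edges $x_iw_i$; the lower-bound proof is then global, building a tournament on six designated internal vertices according to which ordered pairs admit canonical-pattern paths and deriving a contradiction from the resulting walks (whose lengths cannot all be $0$ mod $3$). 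If you want to salvage your approach, you would need both to break the mod-$3$ homogeneity of the path lengths and to replace the local two-vertex argument by something that rules out colorings in which different paths carry different canonical offsets.
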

	
	We complement these results by proving that graphs with all cycle lengths divisible by $3$ are $3$-colorable in a strongly proper connected sense.
	
	\begin{theorem}	\label{theorem_ps_3_cycles}
		Let $G$ be a $2$-connected graph. If the length of every cycle in G is divisible by $3$, then $\spc(G) \leq 3$.
	\end{theorem}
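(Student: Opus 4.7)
The plan is to exploit the divisibility-by-$3$ hypothesis structurally: I will build a labeling $f \colon V(G) \to \mathbb{Z}_3$ together with an orientation of $G$ so that every oriented edge $u \to v$ satisfies $f(v) \equiv f(u)+1 \pmod{3}$ — that is, a homomorphism to the directed triangle $\vec{C_3}$. I would obtain such an $f$ from a depth-first search: fix a DFS spanning tree rooted at some vertex $r$, define $f(v) \in \mathbb{Z}_3$ by $f(v) \equiv d(v) \pmod{3}$ where $d$ is the DFS depth, and orient tree edges from parent to child and non-tree (back) edges from descendant to ancestor. For a tree edge the required increment is automatic, while a back edge between an ancestor $u$ and a descendant $v$ spans a fundamental cycle of length $1+(d(v)-d(u))$; the hypothesis forces $d(v)-d(u) \equiv 2 \pmod{3}$, which is exactly the condition needed for the orientation $v \to u$ to satisfy $f(u) \equiv f(v)+1$.

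Next I would verify that this DFS orientation is strongly connected. This relies only on the fact that $G$ is bridgeless (a consequence of $2$-connectivity): for every tree edge $(p,c)$ the absence of bridges forces a back edge from the subtree of $c$ to $p$ or to a proper ancestor of $p$, and iterating this observation shows that every vertex can reach the root along oriented edges, from which the tree gives descent to any other vertex. This is the classical DFS-based form of Robbins' theorem on strong orientations, which I would invoke rather than reprove.

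I would then color each oriented edge by $c(u \to v) := f(u)$, using the three colors $\{0,1,2\}$. Along any directed path $v_0 \to v_1 \to \dots \to v_k$ the color sequence is $f(v_0), f(v_0)+1, f(v_0)+2, f(v_0), \dots \pmod{3}$, so any three consecutive edges carry the three distinct colors and the path is strongly proper. By strong connectivity of the orientation, every ordered pair of vertices is joined by a directed path, hence by a strongly proper path, and so $\spc(G) \leq 3$.

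The conceptual heart of the argument is that the divisibility-by-$3$ hypothesis is exactly what makes the back-edge increments compatible with the depth-mod-$3$ labeling, so no obstruction arises. The main technical point I expect to have to argue carefully is the strong connectivity of the DFS orientation; once that is in place, the strongly-proper verification is essentially tautological, since directed paths are automatically rainbow on every window of three consecutive edges.
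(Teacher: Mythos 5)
Your argument is correct and takes a genuinely different route from the paper's. The paper proceeds via an open ear decomposition (Whitney's theorem): it first proves that every ear, and every path in the already-built graph between an ear's endpoints, has length divisible by $3$, and then inductively extends a $3$-coloring ear by ear so that every directed path carries the canonical $1,2,3,1,2,3,\dots$ pattern, maintaining as invariants that all edges leaving a vertex share one color and all edges entering it share the preceding one. Those invariants are precisely your depth-mod-$3$ labeling $f$ in disguise, so both proofs ultimately construct the same object: a strong orientation admitting a homomorphism to the directed triangle, colored by $c(u\to v)=f(u)$. What you do differently is to obtain this object in one step from a DFS tree: the divisibility hypothesis applied to the fundamental cycles of back edges yields the compatibility $f(u)\equiv f(v)+1 \pmod 3$, and the classical Robbins-type argument for bridgeless graphs yields strong connectivity of the orientation. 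Your route is shorter, avoids the paper's case analysis for paths that straddle a newly added ear, and in fact uses $2$-connectivity only through bridgelessness, so it proves a marginally stronger statement. The paper's heavier ear-decomposition machinery is presumably chosen because the same framework, with more colors and more invariants, is reused in the proof of the main bound $\spc(G)\leq 5$, where no global vertex labeling is available.
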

	
	The proof of this result is simple but it provides a good illustration of the main idea used in the proof Theorem \ref{theorem_ps2connected}. The main tool is based on the ear decomposition of graphs. Recall that an \emph{open ear decomposition} of a graph $G$ is a sequence $P_1,\ldots,P_h$ of subgraphs of $G$ that partition the set of edges of $G$, where $P_1$ is a cycle and every $P_i$, with $2\leq i\leq h$, is a path that intersects $P_1\cup\cdots\cup P_{i-1}$ in exactly its endpoints. Each $P_i$ is called an \emph{ear}. We will make use of following well known result of Whitney \cite{Whitney}.
	
	\begin{theorem}[Whitney, 1932]
		A graph $G$ with at least two edges is $2$-connected if and only if it has an open ear decomposition.
		\label{theorem_whitney}
	\end{theorem}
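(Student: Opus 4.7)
The plan is to prove both implications of the equivalence. The forward direction, that an ear decomposition implies 2-connectedness, goes by induction on the number of ears $h$. The reverse direction is built greedily, exploiting that removing any single vertex from $G$ leaves the graph connected.

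For sufficiency I would induct on $h$. When $h=1$, the graph $P_1$ is a cycle and is therefore 2-connected. For $h\geq 2$, assuming $H=P_1\cup\cdots\cup P_{h-1}$ is 2-connected, I would attach the path $P_h$ (with distinct endpoints $u,v\in V(H)$ and interior disjoint from $H$) and check that deleting any single vertex $w$ preserves connectedness. If $w\in V(H)$, then $H-w$ is connected by the inductive hypothesis, and the interior of $P_h$ still reaches $H-w$ through whichever of $u,v$ differs from $w$. If $w$ is internal to $P_h$, then $H$ is untouched and the two components of $P_h-w$ remain attached to $H$ via $u$ and $v$ respectively.

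For necessity I would grow the decomposition greedily. Since $G$ is 2-connected with at least two edges, it contains a cycle; take $P_1$ to be one such cycle. Given $H:=P_1\cup\cdots\cup P_i$ with $H\subsetneq G$, I would produce the next ear $P_{i+1}$ in two cases. If some edge of $E(G)\setminus E(H)$ has both endpoints in $V(H)$, take it as a single-edge ear. Otherwise $V(H)\subsetneq V(G)$, so by connectedness there is an edge $xy$ with $x\in V(H)$ and $y\notin V(H)$. Since $G$ is 2-connected, $G-x$ is connected, so there is a path from $y$ to some vertex of $V(H)\setminus\{x\}$ inside $G-x$; truncating this path at the first vertex where it enters $V(H)$ and prepending the edge $xy$ yields an ear that meets $H$ in exactly its two (necessarily distinct) endpoints.

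The main obstacle is verifying that the candidate $P_{i+1}$ built in the necessity argument genuinely satisfies the ear condition. The interior lies outside $V(H)$ by the truncation rule, the endpoints are distinct because the path from $y$ is chosen inside $G-x$ and therefore cannot end at $x$, and every edge of $P_{i+1}$ is new since each such edge has at least one endpoint outside $V(H)$ or coincides with the chosen crossing edge $xy$. Termination of the procedure is automatic: each new ear contributes at least one previously unused edge, so after at most $|E(G)|$ iterations we reach $H=G$, yielding the desired ear decomposition.
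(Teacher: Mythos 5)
The paper does not prove this statement: it is quoted as a classical result of Whitney with a citation to the 1932 paper, so there is no in-text argument to compare yours against. Your proof is the standard textbook argument and it is sound. The sufficiency induction correctly isolates the two cases for the deleted vertex (in $V(H)$ versus internal to the new ear), and the necessity argument correctly uses connectedness of $G-x$ to route from the new vertex $y$ back to $V(H)\setminus\{x\}$, with the truncation rule guaranteeing that the candidate ear meets $H$ in exactly its two distinct endpoints and that all of its edges are new (each is incident to a vertex outside $V(H)$, or is a chord handled in the first case). Two small points worth making explicit if you write this up: first, the hypothesis ``at least two edges'' together with $2$-connectedness forces minimum degree at least $2$ on at least three vertices, which is what actually guarantees the initial cycle $P_1$; second, the exhaustiveness of your two cases for constructing $P_{i+1}$ rests on the observation that if no unused edge has both endpoints in $V(H)$, then $V(H)\subsetneq V(G)$ and connectedness supplies a crossing edge $xy$. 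Neither is a gap, just a line each of justification.
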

	
	A paradigm of colorful connectedness can be studied for other types of colorings as well. As postulated by Brause, Jendrol', and Schiermeyer \cite{Brause-Jendrol-Schiermeyer}, one may fix any property $\mathcal{P}$ of words (sequences), and then investigate the corresponding \emph{$\mathcal{P}$-connected coloring}, defined by the condition that every pair of vertices is joined by a path whose color sequence has the property $\mathcal{P}$. Consider for example the following, particularly intriguing property of sequences, which is close (in some sense) to the one stemming from the strong edge coloring of graphs.
	
	A sequence of the form  $c_1c_2\cdots c_nc_1c_2\cdots c_n$ is called a \emph{repetition}. An edge-colored path is \emph{nonrepetitive} if its color sequence does not contain a repetition as a \emph{block}, i.e., a subsequence of \emph{consecutive} terms. An edge-colored graph $G$ is \emph{nonrepetitively connected} if every pair of distinct vertices is joined by at least one nonrepetitive path. Denote by $\nrc(G)$ the least number of colors needed for such a coloring of $G$.
	
	Is it true that $\nrc(G)$ is finitely bounded for $2$-connected graphs? Notice that it is not at all obvious that $\nrc (P)$ is finite for every path $P$. However, by a 1906 result of Thue \cite{Thue}, every path can be nonrepetitively colored by using just \emph{three} colors. So, there is a chance for a finite bound and we prove that this is indeed true.
	
	\begin{theorem}\label{Theorem Thue}
		Every $2$-connected graph $G$ satisfies $\nrc(G)\leq 15$ and every $4$-connected graph $G$ satisfies $\nrc(G)\leq 6$.
	\end{theorem}
	 
	 Let us mention that the notion of nonrepetitive coloring of graphs, as introduced by Alon, Hałuszczak, Grytczuk, and Riordan in \cite{Alon-GH-Riordan}, can be considered more generally, in a way similar to the usual proper coloring of graphs (in both, edge or vertex version). A recent survey by Wood \cite{Wood} collects many interesting results on this topic.
	
	The proof of Theorem \ref{Theorem Thue} is based on known results on spanning trees in $k$-connected graphs. It can be easily extended to more general scenarios involving fairly universal properties of words. We discuss these issues in the final section of the paper, where we state a general conjecture on $\mathcal{P}$-connected coloring of graphs. Proofs of all our results are collected in the next section.
	
	\section{Proofs of the results}
	
	\subsection{$2$-connected graphs with cycle lengths divisible by $3$}
	We start with a simple proof that $2$-connected graphs whose all cycle lengths are divisible by $3$ satisfy $\spc(G)\leq 3$.
	\begin{proof}[Proof of Theorem \ref{theorem_ps_3_cycles}]
		Let $G$ be a $2$-connected graph. We may assume that $G$ is \emph{minimally} $2$-connected, which means that it looses this property if we remove any single edge from it. By Theorem \ref{theorem_whitney}, $G$ has an open ear decomposition, which we denote as $ED = (P_1,\ldots, P_h)$. Let $G_i$ be the subgraph of $G$ consisting of the first $i$ ears of $ED$, that is, $G_i=P_1 \cup\cdots \cup P_i$. For each ear $P_{i}$, let $s_{i}$ and $t_{i}$ be the endpoints of $P_{i}$. First we present a claim concerning the lengths of $P_i$ and some other paths in $G_i$. 
		
		\begin{claim}
			Let $P_i$ be an ear from $ED$. Then, the length of $P_i$ and the lengths of all paths between $s_i$ and $t_i$ in the graph $G_{i-1}$ are divisible by $3$ (see Figure \ref{figure_ED_3cycles}).
			\label{claim_3cycles_paths}
		\end{claim}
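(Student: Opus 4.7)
The plan is to prove both conclusions together, using only the cycle-length hypothesis and elementary ear-decomposition facts. My first observation is that the subsequence $P_1,\ldots,P_{i-1}$ is itself an open ear decomposition, so by Whitney's theorem (Theorem~\ref{theorem_whitney}) the graph $G_{i-1}$ is $2$-connected. The base case $i=1$ is immediate: $P_1$ is a cycle of $G$, hence a cycle of length divisible by $3$, and the claim about paths in $G_0$ is vacuous. So henceforth I assume $i\geq 2$.

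The core step is to apply Menger's theorem inside $G_{i-1}$: since $G_{i-1}$ is $2$-connected, there exist two internally disjoint paths $Q_1,Q_2$ from $s_i$ to $t_i$ in $G_{i-1}$. I then read off three congruences modulo $3$ from three cycles of $G$. First, the concatenation $Q_1\cup Q_2$ is a cycle (internally disjoint paths with common endpoints), so $|Q_1|+|Q_2|\equiv 0\pmod 3$. Second, because $P_i$ meets $G_{i-1}$ only in its endpoints $s_i$ and $t_i$, each $P_i\cup Q_j$ is also a cycle of $G$, yielding $|P_i|+|Q_j|\equiv 0\pmod 3$ for $j=1,2$. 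Taking the difference gives $|Q_1|\equiv|Q_2|\equiv -|P_i|\pmod 3$, and plugging this into the first congruence forces $-2|P_i|\equiv 0\pmod 3$. Since $\gcd(2,3)=1$, this gives $|P_i|\equiv 0\pmod 3$, which is the first half of the claim.

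For the second half, let $Q$ be an arbitrary $s_i$--$t_i$ path in $G_{i-1}$. The same observation that $P_i$ is internally disjoint from $G_{i-1}$ shows that $P_i\cup Q$ is a cycle of $G$, so $|P_i|+|Q|\equiv 0\pmod 3$; combined with $|P_i|\equiv 0\pmod 3$ proved just above, this yields $|Q|\equiv 0\pmod 3$, as required.

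I do not anticipate a genuine obstacle: the only substantive ingredients are Whitney's theorem (to keep $G_{i-1}$ $2$-connected, already cited in the paper) and Menger's theorem (to produce two internally disjoint $s_i$--$t_i$ paths). Everything else is a short linear computation modulo $3$. If one wanted to avoid invoking Menger, the same conclusion could be reached inductively on $i$, using the fact that adding an ear preserves $2$-connectivity and hence preserves the existence of two internally disjoint paths between the attachment vertices; but going through Menger is cleaner and keeps the proof a direct one.
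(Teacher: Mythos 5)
Your proof is correct and follows essentially the same route as the paper's: both take two internally disjoint $s_i$--$t_i$ paths in the $2$-connected graph $G_{i-1}$, read off the three congruences from the three resulting cycles, and deduce $|P_i|\equiv 0\pmod 3$ and hence the divisibility of every connecting path. Your version merely makes explicit the final step (extending from the two chosen paths to an arbitrary $s_i$--$t_i$ path), which the paper leaves implicit.
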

		\begin{proof}
			The statement is obvious for $i=1$, so, assume that $i > 1$. The graph $G_{i-1}$ is $2$-connected so there exist two paths, $Q$ and $R$, from $s_i$ to $t_i$, such that $Q \cap R = \{s_i, t_i\}$.	Let $q$ and $r$ be the lengths of $Q$ and $R$, respectively, and let $p$ be the length of $P_i$. The paths $Q$ and $R$ form a cycle in $G_{i-1}$, so $q + r$ must be divisible by $3$. Moreover, $Q$ and $P_i$, as well as $R$ and $P_i$, form a cycle in $G_i$. Therefore we have $$q + r = q + p = r + p\equiv0\pmod3.$$ This implies that $q+r+p=0\pmod3$ and therefore $ q=r=p=0\pmod3$, which completes the proof of the claim.
		\end{proof}
		
		\begin{figure}[th]
			\centering
			\begin{tikzpicture}[scale=2]
				\foreach \i in {1, 2, 3, 4, 5, 6, 7, 8, 9, 10, 11, 12}
				{
					\coordinate (V1\i) at (90 - \i * 360/12 + 360/12:0.8);
					\path[fill=black] (V1\i) circle (0.04);
				}
				
				\foreach \i/\j in {1/2, 2/3, 3/4, 4/5, 5/6, 6/7, 7/8, 8/9, 9/10, 10/11, 11/12, 12/1}
				{
					\path[draw](V1\i) -- (V1\j);
				}
				\node[above left] at (V11) {$P_1$};
				\foreach \i in {1, 2, 3, 4, 5}
				{
					\coordinate (V2\i) at (95 - \i * 360/20 :1.2);
					\path[fill=black] (V2\i) circle (0.04);
				}
				\foreach \i/\j in {1/2, 2/3, 3/4, 4/5}
				{
					\path[draw](V2\i) -- (V2\j);
				}
				\path[draw](V11) -- (V21);
				\path[draw](V14) -- (V25);
				\node[above] at (V23) {$P_2$};
				
				\foreach \i in {1, 2}
				{
					\coordinate (V3\i) at (115 + \i * 360/10 :1.6);
					\path[fill=black] (V3\i) circle (0.04);
				}
				\path[draw](V31) -- (V32);
				\path[draw](V112) -- (V31);
				\path[draw](V19) -- (V32);
				\node[left] at ($(V31)!0.5!(V32)$) {$P_3$};
				
				\foreach \i in {1, 2}
				{
					\coordinate (V4\i) at (-27 + \i * 360/36 :2.2);
					\path[fill=black] (V4\i) circle (0.04);
				}
				\path[draw](V41) -- (V42);
				\path[draw](V23) -- (V42);
				\path[draw](V17) -- (V41);
				\node[right] at ($(V41)!0.5!(V42)$) {$P_4$};
				
			\end{tikzpicture}
			\caption{An exemplary ear decomposition $ED = \{P_1, P_2, P_3, P_4\}$ of some graph with cycles of length divisible by $3$ ($P_1$ is a cycle with $12$ edges, $P_2$ is a path with $6$ edges, $P_3$ and $P_4$ are paths with $3$ edges).}
			\label{figure_ED_3cycles}
		\end{figure}
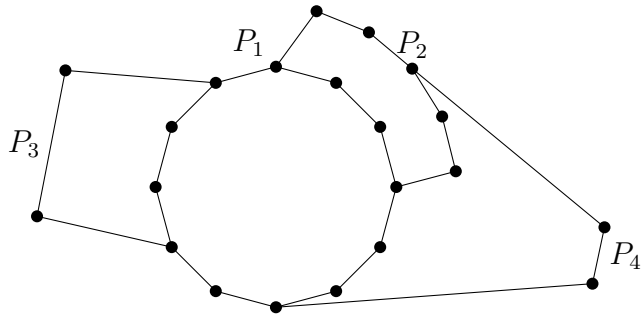
		
		Let us define a sequence of oriented graphs $D_0,D_1,\ldots,D_{h}$ such that $D_0$ is empty and for each $i>0$, $D_i$ is obtained from $D_{i-1}$ by adding the path $P_i$, oriented from $s_i$ to $t_i$. Let $D$ be the last oriented graph in this sequence, i.e., $D:=D_{h}$. Now we will color the edges of $D$ with $3$ colors. In this coloring every \emph{directed} path will be strongly proper. It is easy to see that the digraph $D$ is strongly connected, so our coloring will give us even more than the required edge coloring of $G$.
		
		Let $C$ be a $3$-coloring of the edges of $D$ by colors $\{1,2,3\}$, and let $P$ be a directed path in $D$. We say that $P$ has a \emph{canonical pattern} if the sequence of colors of $P$ forms a \emph{block} (subsequences of consecutive terms) in the infinite periodic sequence $(1, 2, 3, 1, 2, 3, 1, 2, 3, ....)$. Note that a path $P$ has a canonical pattern if and only if $P$ is strongly proper.
		
		\begin{claim}
			\label{claim_3_cycles_coloringOfD}
			There exists a $3$-coloring $C$ of the edges of $D$ such that for every restriction $C_i$ of $C$ to the subgraph $D_i$, $1\leq i\leq h$, the following properties hold:
			\begin{itemize}
				\item[(i)] For every $v\in V(D_i)$, all edges going out of $v$ have the same color.
				\item [(ii)] For every $v\in V(D_i)$, all edges going into $v$ have the same color.
				\item [(iii)] Every path in $D_i$ has a canonical pattern.
			\end{itemize}
		\end{claim}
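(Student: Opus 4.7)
My plan is to construct the coloring inductively, adding one ear at a time. The organising observation is that conditions (i)--(iii) together are equivalent to the existence of a vertex labeling $a\colon V(D_i) \to \{1,2,3\}$ with the property that every directed edge $u\to v$ receives color $a(v)$ and satisfies $a(v) \equiv a(u) + 1 \pmod 3$; canonical patterns on directed paths then correspond to reading $a$-values in arithmetic progression modulo $3$. So the task reduces to defining such an $a$ on each $D_i$.

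In the base case, Claim \ref{claim_3cycles_paths} tells us $D_1$ is a directed cycle of length divisible by $3$, so we label its vertices cyclically $1,2,3,1,2,3,\ldots$ (reduced modulo $3$) and color each edge by its head's label. For the inductive step, we append the ear $P_i = s_i\,v_1\cdots v_{k-1}\,t_i$ of length $k$. Since every interior $v_j$ has a unique in-edge and a unique out-edge in $D_i$, the value $a(v_j)$ is forced to be $a(s_i) + j \pmod 3$, and consequently the color of the final edge $v_{k-1}\to t_i$ is forced to be $a(s_i) + k \pmod 3$; for (ii) to persist at $t_i$ this must equal the already-defined $a(t_i)$.

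The single substantive step is this consistency check: we need $k \equiv a(t_i) - a(s_i) \pmod 3$. A short induction shows that each $D_{i-1}$ is strongly connected (a directed cycle is strongly connected, and appending a directed ear preserves this because one can traverse the ear forward and then return through the preceding graph). So there exists a directed path $Q$ in $D_{i-1}$ from $s_i$ to $t_i$, and the canonical pattern on $Q$ (from the inductive hypothesis) gives $|Q| \equiv a(t_i) - a(s_i) \pmod 3$. By Claim \ref{claim_3cycles_paths}, both $|Q|$ and $k$ are divisible by $3$, hence $a(t_i) \equiv a(s_i) \equiv a(s_i) + k \pmod 3$, which is exactly what was needed. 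Claim \ref{claim_3cycles_paths} was designed precisely to secure this consistency; everything else --- verifying that the new incident edges at $s_i$ and $t_i$ match the pre-existing colors, that (i) and (ii) hold trivially at the interior vertices of $P_i$, and that (iii) holds on any directed path in $D_i$ because the only new transitions happen at $s_i$ and $t_i$ where matching colors guarantee a canonical step --- is routine bookkeeping. The main obstacle is, and will remain, this one congruence.
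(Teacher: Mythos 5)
Your proof is correct, and while it follows the same inductive skeleton as the paper (process the ears of the decomposition one at a time, use Claim~\ref{claim_3cycles_paths} to resolve the single congruence at $t_i$), your organizing device is genuinely different and cleaner. You introduce a vertex potential $a\colon V(D_i)\to\{1,2,3\}$ with $a(v)\equiv a(u)+1\pmod 3$ along every arc and color each arc by the label of its head; the paper instead colors edges directly (first edge of $P_i$ matches the out-color of $s_i$, then continue canonically) and must then verify property (iii) by a case analysis over how an arbitrary directed path meets $P_i$ (contains all of $P_i$; has both ends interior to $P_i$; has one end interior), each case requiring an auxiliary comparison path $Q$ or $W$ in $D_{i-1}$. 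Your potential makes (iii) automatic for every directed path at once, since colors along any path read off consecutive $a$-values, so the entire case analysis disappears; the price is the small extra obligations you correctly discharge, namely that $D_{i-1}$ is strongly connected (so a directed $s_i$--$t_i$ path $Q$ exists to pin down $a(t_i)-a(s_i)\equiv|Q|\equiv 0\pmod 3$) and that the forced labels along the new ear close up consistently at $t_i$. Both proofs ultimately rest on exactly the same arithmetic fact from Claim~\ref{claim_3cycles_paths}, so I consider your argument a valid and more economical rendering of the same result.
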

		\begin{proof}
			We will construct the coloring $C$ by inductively defining $C_i$, for $i = 1, 2, ..., h$. The base case is when $D_1$ is a directed cycle of length divisible by $3$. We color the edges of $D_1$ consecutively with colors $1,2,3$, obtaining thereby the coloring $C_1$. The properties (i)-(iii) are obviously satisfied.
			
			Let $2\leq i \leq h$ be fixed and suppose that we have the coloring  $C_{i-1}$ of the edges of $D_{i-1}$ satisfying properties (i)-(iii). Recall that $D_i$ is obtained from $D_{i-1}$ by adding the path $P_i$ from $s_i$ to $t_i$. We will construct the coloring $C_i$ from $C_{i-1}$ by coloring the edges of $P_i$ as follows (see Figure \ref{figure_coloring_pi_3cycles}):
			\begin{itemize}
				\item [(a)] the first edge of $P_i$ has the same color as the edges going out of $s_i$ in $D_{i-1}$,
				\item [(b)] the remaining edges of $P_i$ are colored in such a way that $P_i$ has a canonical pattern (for example if the first edge has color $2$ then the second one has color $3$, the third one $1$, the fourth one $2$, and so on). We will prove that $C_i$ satisfies properties (i)-(iii).
			\end{itemize}
			
			\begin{figure}[th]
				\centering
				\begin{tikzpicture}[scale=3.75]
					
					\coordinate (V00) at (90:1);
					\coordinate (V01) at (70:1);
					\coordinate (V02) at (5:1);
					\coordinate (V03) at (-15:1);
					\coordinate (V04) at (90:0.75);
					\coordinate (V05) at (-15:0.75);
					\coordinate (V06) at (50:1);
					\coordinate (V07) at (25:1);
					
					\node[scale = 1.5, above ] at (V01) {$s_i$};
					\node[scale = 1.5, right=0.2cm] at (V02) {$t_i$};
					
					\draw[-{Latex[length=2.5mm]}, dashed] (100:1) arc (100:70:1);
					\draw[-{Latex[length=2.5mm]}, dashed] (70:1) arc (70:50:1);
					\draw[ dashed] (50:1) arc (50:25:1);
					\draw[-{Latex[length=2.5mm]}, dashed] (25:1) arc (25:5:1);
					\draw[-{Latex[length=2.5mm]}, dashed] (5:1) arc (5:-15:1);
					\draw[dashed] (-15:1) arc (-15:-25:1);
					
					\foreach \i in {0, 1, 2, 3, 4, 5, 6, 7}
					{
						\path[fill=black] (V0\i) circle (0.03);
					}
					\foreach \i in {1, 2, 3, 4, 5}
					{
						\coordinate (V1\i) at (45 - \i * 360/40 + 360/20:1.5);
						\path[fill=black] (V1\i) circle (0.03);
					}
					\foreach \i/\j in {1/2, 2/3, 3/4, 4/5}
					{
						\path[-{Latex[length=2.5mm]}, draw](V1\i) -- (V1\j);
					}
					\path[-{Latex[length=2.5mm]}, draw](V01) -- (V11);
					\path[-{Latex[length=2.5mm]}, draw](V15) -- (V02);
					
					\path[draw][-{Latex[length=2.5mm]}, dashed] (V04) -- (V01);
					\path[draw][-{Latex[length=2.5mm]}, dashed] (V02) -- (V05);
					\node[above] at ($(V01)!0.5!(V11)$) {$2$};
					\node[above right] at ($(V11)!0.5!(V12)$) {$3$};
					\node[above right] at ($(V12)!0.5!(V13)$) {$1$};
					\node[above right] at ($(V13)!0.5!(V14)$) {$2$};
					\node[above right] at ($(V14)!0.5!(V15)$) {$3$};
					\node[below right] at ($(V15)!0.5!(V02)$) {$1$};
					
					\node[above] at ($(V00)!0.5!(V01)$) {$1$};
					\node[below] at ($(V04)!0.5!(V01)$) {$1$};
					\node[above] at ($(V01)!0.5!(V06)$) {$2$};
					
					\node[right] at ($(V02)!0.5!(V03)$) {$2$};
					\node[above] at ($(V02)!0.5!(V05)$) {$2$};
					\node[above right] at ($(V07)!0.5!(V02)$) {$1$};

				\end{tikzpicture}
				\caption{A coloring of $P_i$ (a fragment of the graph $D_{i-1}$ is dashed, the path $P_i$ is drawn by normal lines, the numbers $1, 2, 3$ mean colors of edges).}
				\label{figure_coloring_pi_3cycles}
			\end{figure}
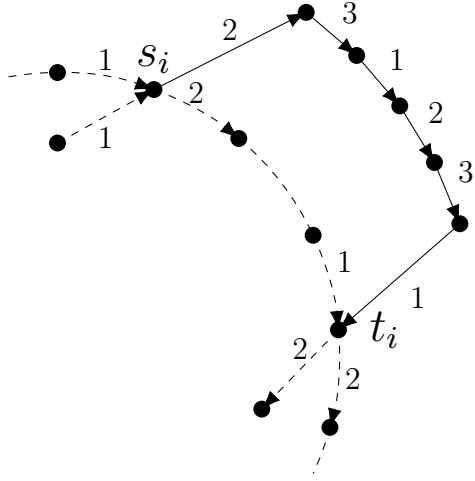
			
			Properties (i) and (ii) remain obviously satisfied for vertices from $V(D_i) \setminus V(P_i)$. They are also satisfied for all internal vertices of $P_i$, as they have both indegree and outdegree equal to $1$ in $D_i$, and for $s_i$ by (a). For $t_i$ property (i) is of course still satisfied, but to see that property (ii) holds, let us take a closer look at paths from $s_i$ to $t_i$. We know that all edges going out of $s_i$ in $D_i$ have the same color (by (i)), that all paths from $s_i$ to $t_i$ in $D_i$ has a canonical pattern (for paths from $D_{i-1}$ from (iii), for $P_i$ from (b)) and that the lengths of $P_i$ and of every path from $s_i$ to $t_i$ in $D_{i-1}$ are divisible by $3$ (from \ref{claim_3cycles_paths}). Therefore both $P_i$ and all other paths from $s_i$ to $t_i$ in $D_i$ end with the same color, so all edges going into $t_i$ have the same color and property (ii) is satisfied also for $t_i$.
			
			Now consider the property (iii). It remains clearly satisfied for paths which do not contain edges from $P_i$.
			
			Now consider some path $R$ from $u$ to $v$, where $u, v \in V(D_i)$, which contain all edges from $P_i$ (see Figure \ref{figure_r_path_case1}). It means that $R$ consists of some path $R_1$ from $u$ to $s_i$ (maybe empty), the path $P_i$ and some path $R_2$ from $t_i$ to $v$ (maybe empty). Let $Q$ be a path from $s_i$ to $t_i$ in the graph $D_{i-1}$. From (iii) we have that the path: $R_1 \cup Q \cup R_2$ has a canonical pattern, from (i) that the first edge of $P_i$ has the same color as the first edge of $Q$ and from Claim \ref{claim_3cycles_paths} that the lengths of $P_i$ and $Q$ are divisible by $3$. Therefore $R$ has a canonical pattern and property (iii) remains satisfied in this case.
			
			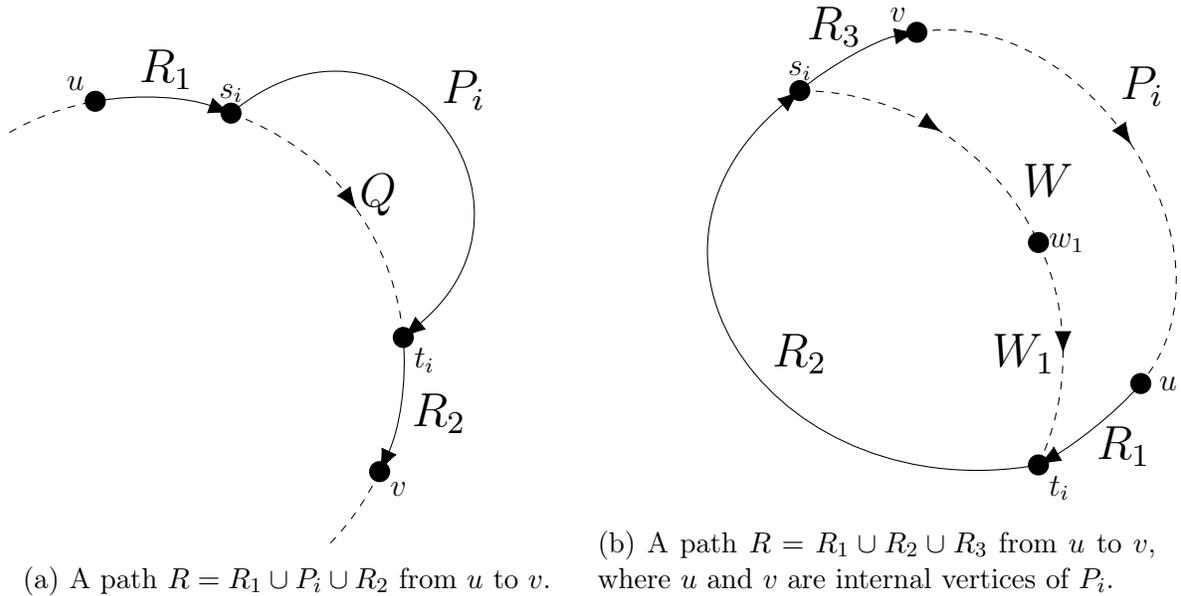
\begin{figure}[th]
				\centering
				\begin{subfigure}[b]{0.45\textwidth}
					\centering
					\begin{tikzpicture}[scale=3.5]
						
						\coordinate (V00) at (100:1);
						\coordinate (V01) at (70:1);
						\coordinate (V02) at (5:1);
						\coordinate (V03) at (-25:1);
						
						\node[above left] at (V00) {$u$};
						\node[above] at (V01) {$s_i$};
						\node[below right] at (V02) {$t_i$};
						\node[below right] at (V03) {$v$};
						\node[scale=1.5] at(85:1.11) {$R_1$};
						\node[scale=1.5] at(40:1.6) {$P_i$};
						\node[scale=1.5] at(-10:1.15) {$R_2$};
						\node[scale=1.5] at(35:1.1) {$Q$};
						
						\foreach \i in {0, 1, 2, 3}
						{
							\path[fill=black] (V0\i) circle (0.04);
						}
						
						\draw[-{Latex[length=3mm]}] (V01) to [out=40,in=40, distance=0.8cm] (V02);
						\draw[dashed] (120:1) arc (120:100:1);
						\draw[-{Latex[length=3mm]}] (100:1) arc (100:70:1);
						\draw[-{Latex[length=3mm]}, dashed] (70:1) arc (70:35:1);
						\draw[ dashed] (35:1) arc (35:5:1);
						\draw[-{Latex[length=3mm]}] (5:1) arc (5:-25:1);
						\draw[dashed] (-25:1) arc (-25:-45:1);
						
					\end{tikzpicture}
					\caption{A path $R = R_1 \cup P_i \cup R_2$ from $u$ to $v$.}
					\label{figure_r_path_case1}
				\end{subfigure}%
				\hspace{1em}
				\begin{subfigure}[b]{0.45\textwidth}
					\centering
					\begin{tikzpicture}[scale=3.5, decoration={markings, mark= at position 0.5 with {\arrow{Latex[length=3mm]}};}]
						
						\coordinate (V00) at (-5:1.3);
						\coordinate (V01) at (90:1);
						\coordinate (V02) at (-25:1);
						\coordinate (V03) at (70:1.3);
						\coordinate (V04) at (25:1);
						
						\node[right=0.1cm] at (V00) {$u$};
						\node[right] at (V04) {$w_1$};
						\node[above] at (V01) {$s_i$};
						\node[below right] at (V02) {$t_i$};
						\node[above left] at (V03) {$v$};
						\node[scale=1.5] at(-16:1.28) {$R_1$};
						\node[scale=1.5] at(38:1.65) {$P_i$};
						\node[scale=1.5] at(215:0) {$R_2$};
						\node[scale=1.5] at(84:1.25) {$R_3$};
						\node[scale=1.5] at(35:1.15) {$W$};
						\node[scale=1.5] at(0:0.85) {$W_1$};
						
						\foreach \i in {0, 1, 2, 3, 4}
						{
							\path[fill=black] (V0\i) circle (0.04);
						}
						\draw[-{Latex[length=3mm]}] (V01) to [out=40,in=190, distance=0.15cm] (V03);
						\draw[dashed, postaction={decorate}] (V03) to [out=10,in=50, distance=0.65cm] (V00);
						\draw[-{Latex[length=3mm]}] (V00) to [out=230,in=30, distance=0.15cm] (V02);
						\draw[dashed, postaction={decorate}] (V01) arc (90:25:1);
						\draw[dashed, postaction={decorate}] (25:1) arc (25:-25:1);
						\draw[-{Latex[length=3mm]}] (V02) to [out=190,in=220, distance=1cm] (V01);

					\end{tikzpicture}
					\caption{A path $R = R_1 \cup R_2 \cup R_3$ from $u$ to $v$, where $u$ and $v$ are internal vertices of $P_i$.}
					\label{figure_r_path_case2}
				\end{subfigure}%
				\caption{The property (iii) - every path $R$ from $D_i$ has a canonical pattern.}
				\label{figure_r_path}
			\end{figure}
			
			Next let us consider a path $R$ from $u$ to $v$, where both $u$ and $v$ are internal vertices of $P_i$ (see Figure \ref{figure_r_path_case2}). First assume that $R$ is contained in $P_i$ (that is, the order of the vertices on the path $P_i$ is as follows: $s_i$, $u$, $v$, $t_i$). Then $R$ has a canonical pattern from (b). So let's assume the opposite, it means that the order of the vertices on the path $P_i$ is: $s_i$, $v$, $u$, $t_i$. Let us call the parts of $R$ as follows: let $R_1$ be a part of $R$ from $u$ to $t_i$, let $R_2$ be a part of $R$ from $t_i$ to $s_i$, and let $R_3$ be a part of $R$ from $s_i$ to $v$. Note that $R_1$ and $R_3$ are parts of $P_i$ and $R_2$ is a path from $t_i$ to $s_i$ in $D_{i-1}$. Let $W$ be some path from $s_i$ to $t_i$ in $D_{i-1}$. Let $w_1$ be a vertex from $W$ such that the distance from $w_1$ to $t_i$ on $W$ is congruent to the same value modulo $3$ as the distance from $u$ to $t_i$ on $R_i$. Let $W_1$ be a part of $W$ from $w_1$ to $t_i$. Because $P_i$ has a canonical pattern (from (b)), and also $W$ and $W_1 \cup R_2 $ have canonical patterns (from (iii)), and $R_1$ ends with the same color as $W_1$ (from (ii)), we have that the path $R_1 \cup R_2$ has a canonical pattern. Similarly, considering $w_2$ as any vertex from $V(D_{i-1})$ such that $w_2$ belongs to some path from $s_i$ to $t_i$ and the distance from $s_i$ to $w_2$ is congruent to the same value modulo $3$ as the distance from $s_i$ to $v$, we get that the path $R_2 \cup R_3$ has a canonical pattern. Therefore the whole path $R$ has a canonical pattern and property (iii) remains satisfied also in this case.
			
			The last case is when one endpoint of a path is from $V(D_{i-1})$ and the second one is an internal vertex of $P$. Carrying out analogous considerations as in the previous case, we get that the property (iii) is fulfilled here as well, which completes the proof of (iii).
			
			Therefore, the proof of the claim is complete by induction.
		\end{proof}
		
		We constructed the coloring C of the edges of $D$ such that every path in $D$ has a canonical pattern. It means that every path of $D$ is strongly proper. Therefore $G$ with coloring $C$ is strongly proper connected. The proof of Theorem \ref{theorem_ps_3_cycles} is complete.
	\end{proof}

	\subsection{$2$-connected graphs satisfy $\spc(G)\leq 5$}
	\begin{proof}[Proof of Theorem \ref{theorem_ps2connected}]
		Let $H$ be a minimally 2-connected spanning subgraph of $G$. We will construct an edge coloring of $H$ with at most $5$ colors that makes $H$ strongly proper connected. Note that it suffices to get the assertion of the theorem, as the remaining edges from $E(G) \setminus E(H)$ can be colored arbitrarily without affecting validity of the coloring.
		
		From Theorem \ref{theorem_whitney} we know that $H$ has an open ear decomposition. Similarly to the proof of Theorem \ref{theorem_ps_3_cycles} we will color the graph while adding ears, but this time the order of adding ears will be important. Let $ED = (P_1,\ldots, P_h)$ be an open ear decomposition of $H$ in which in every step we add the longest possible ear (see Figure \ref{figure_ED}). Let $H_i$ be the subgraph of $H$ consisting of the first $i$ ears of $ED$, that is, $H_i=P_1\cup\cdots \cup P_i$. For an ear $P_{i}$ let $s_{i}$ and $t_{i}$ be the endpoints of $P_{i}$.
		
		\begin{figure}[th]
			\centering
			\begin{tikzpicture}[scale=2]
				\foreach \i in {1, 2, 3, 4, 5, 6, 7, 8, 9, 10}
				{
					\coordinate (V1\i) at (90 - \i * 360/10 + 360/10:0.8);
					\path[fill=black] (V1\i) circle (0.08);
				}
				
				\foreach \i/\j in {1/2, 2/3, 3/4, 4/5, 5/6, 6/7, 7/8, 8/9, 9/10, 10/1}
				{
					\path[draw][line width=4pt](V1\i) -- (V1\j);
				}
				
				\foreach \i in {1, 2, 3}
				{
					\coordinate (V2\i) at (90 - \i * 360/10 :1.6);
					\path[fill=black] (V2\i) circle (0.08);
				}
				\foreach \i/\j in {1/2, 2/3}
				{
					\path[draw][line width=3pt](V2\i) -- (V2\j);
				}
				\path[draw][line width=3pt](V11) -- (V21);
				\path[draw][line width=3pt](V15) -- (V23);
				
				\foreach \i in {1, 2}
				{
					\coordinate (V3\i) at (90 + \i * 360/5 :1.6);
					\path[fill=black] (V3\i) circle (0.08);
				}
				\path[draw][line width=2pt](V31) -- (V32);
				\path[draw][line width=2pt](V110) -- (V31);
				\path[draw][line width=2pt](V16) -- (V32);
				
				\coordinate (V41) at (90 - 2 * 360/10 :1.2);
				\path[fill=black] (V41) circle (0.08);
				\path[draw][line width=1pt](V12) -- (V41);
				\path[draw][line width=1pt](V14) -- (V41);
				
			\end{tikzpicture}
			\caption{An exemplary ear decomposition $ED = \{P_1, P_2, P_3, P_4\}$ of some graph, where the thickest lines depict $P_1$ and the thinnest lines depict $P_4$ ($P_1$ is a cycle with $10$ edges, $P_2$ is a path with $4$ edges, $P_3$ is a path with $3$ edges and $P_4$ is a path with $2$ edges).}
			\label{figure_ED}
		\end{figure}
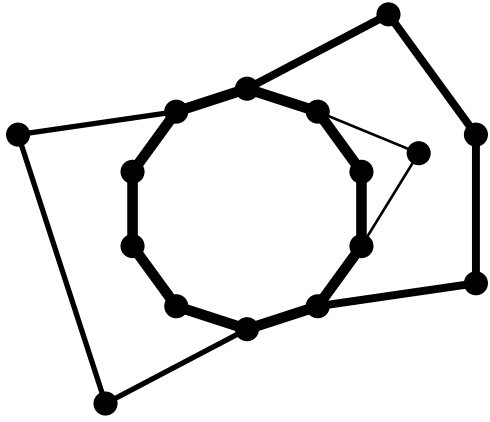

		First we present some claims concerning the properties of the ears from $ED$. Claim \ref{claim_ears} shows that we will process ears from the longest to the shortest.
		
		\begin{claim}
			The ears of $ED$ are in the opposite order to the number of their edges.  
			\label{claim_ears}
		\end{claim}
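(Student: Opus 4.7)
The plan is to argue by contradiction. Suppose there is some index $i$ with $|E(P_i)| < |E(P_{i+1})|$. I will exhibit an ear $P'$ of $H_{i-1}$ whose length exceeds $|E(P_i)|$, which contradicts the greedy choice that made $P_i$ the longest ear available at step $i$. The construction of $P'$ is obtained by splicing $P_{i+1}$ together with one or two pieces of $P_i$, and the case analysis will be driven by the location of the endpoints $s_{i+1}, t_{i+1}$ of $P_{i+1}$. By definition of an open ear decomposition, these endpoints lie in $V(H_i) = V(H_{i-1}) \cup V(P_i)$, while every internal vertex of $P_{i+1}$ lies outside $V(H_i)$ and in particular outside $V(H_{i-1})$.

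First I would dispose of the easy case in which both $s_{i+1}$ and $t_{i+1}$ already lie in $V(H_{i-1})$. Then $P_{i+1}$ is itself a candidate ear for step $i$, since its internal vertices miss $V(H_{i-1})$, and $|E(P_{i+1})| > |E(P_i)|$ contradicts greediness. Next, suppose exactly one of the endpoints, say $s_{i+1}$, is an internal vertex of $P_i$, while $t_{i+1} \in V(H_{i-1})$. The vertex $s_{i+1}$ splits $P_i$ into two subpaths, from $s_i$ to $s_{i+1}$ and from $s_{i+1}$ to $t_i$, of lengths $a$ and $b$ with $a+b = |E(P_i)|$. Take $P'$ to be the longer of these two subpaths followed by $P_{i+1}$; the endpoints of $P'$ are $t_{i+1}$ and either $s_i$ or $t_i$, all in $V(H_{i-1})$, and the internal vertices are a subset of internal vertices of $P_i$ and $P_{i+1}$, hence disjoint from $V(H_{i-1})$. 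So $P'$ is a legitimate ear of $H_{i-1}$, and its length is at least $\max(a,b) + |E(P_{i+1})| > \max(a,b) + |E(P_i)| \geq |E(P_i)|$, a contradiction.

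The remaining case is when both $s_{i+1}$ and $t_{i+1}$ are internal vertices of $P_i$. They split $P_i$ into three subpaths: an initial piece from $s_i$ to (say) $s_{i+1}$, a middle piece of some length $m$ from $s_{i+1}$ to $t_{i+1}$, and a final piece from $t_{i+1}$ to $t_i$. Let $P'$ be obtained by taking the initial piece, then $P_{i+1}$, then the final piece. Its endpoints are $s_i, t_i \in V(H_{i-1})$; its internal vertices consist of $s_{i+1}$, $t_{i+1}$, and further internal vertices of $P_i$ and of $P_{i+1}$, all of which lie outside $V(H_{i-1})$. Thus $P'$ is an ear of $H_{i-1}$, of length $|E(P_i)| - m + |E(P_{i+1})|$. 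Since $m < |E(P_i)| < |E(P_{i+1})|$ we get $|E(P')| > |E(P_i)|$, contradicting greediness once more.

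The only real obstacle is verifying in each case that the spliced path $P'$ really is an ear of $H_{i-1}$ — that is, that its two endpoints lie in $V(H_{i-1})$ and that no other vertex of $P'$ does. This is entirely bookkeeping about where vertices of $P_i$ and $P_{i+1}$ are allowed to live, once one recalls the ear-decomposition conditions for $P_i$ (in $H_{i-1}$) and for $P_{i+1}$ (in $H_i$). Once that is checked, the length comparison in each case is immediate from the assumption $|E(P_{i+1})| > |E(P_i)|$, and the claim follows.
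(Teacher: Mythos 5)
Your proof is correct and takes essentially the same approach as the paper's: assume two consecutive ears violate the ordering, split into the same three cases according to whether the endpoints of the longer ear lie in the earlier graph or are internal to the preceding ear, and in each case splice pieces of the two ears into a longer admissible ear, contradicting the greedy choice. The only (harmless) loose end is in your middle case: you should pick the subpath of $P_i$ whose $H_{i-1}$-endpoint differs from $t_{i+1}$, so that the spliced path is an open ear rather than a closed walk; the length inequality works for either subpath, so nothing is lost.
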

		\begin{proof}
			Let us assume the opposite. Let $P_i \in ED$ be an ear with the smallest possible index such that $P_i$ has more edges than $P_{i-1}$. Note that $i > 1$. From the definition of $ED$ we know that both endpoints of $P_i$ are internal vertices of some ears with smaller indices (maybe two different). We consider three cases.
			
			{\bf Case 1: } None of $s_i$ and $t_i$ are internal vertices of $P_{i-1}$. Therefore both $s_i$ and $t_i$ belong to some ears with indices smaller than $i-1$. It means that we could add a longer ear $P_i$ to the graph $H_{i-2}$ instead of $P_{i-1}$, which contradicts the definition of $ED$. 
			
			{\bf Case 2: } Both $s_i$ and $t_i$ are internal vertices of $P_{i-1}$. It means that instead of $P_{i-1}$ we could add to the graph $H_{i-2}$ a longer ear composed of the part of $P_{i-1}$ from one endpoint to $s_i$, the path $P_i$ and the part of $P_{i-1}$ from $t_i$ to the second endpoint of $P_{i-1}$, which again contradicts the definition of $ED$. 
			
			{\bf Case 3: } Exactly one endpoint of $P_i$ is an internal vertex of $P_{i-1}$. Assume that it is $s_i$. So $t_i$ belongs to some ear with index smaller than $i-1$. It means that to the graph $H_{i-2}$ we could add an ear composed of the part of $P_{i-1}$ from one endpoint to $s_i$ and the path $P_i$, which has more edges than $P_{i-1}$. It contradicts the definition of $ED$ again.
			
			The proof of the claim is complete.
		\end{proof}
		
		Let us recall that $H$ is minimally $2$-connected so every $P_{i}$ has at least one internal vertex. From the definition of $ED$ we know that $(s_{i}, t_{i}) =P_{i} \cap H_{i-1}$. Claim \ref{claim_endpoints_of_ears} shows that for every ear $P_{i}$ its endpoints cannot be adjacent in the graph $H_{i-1}$.
		
		\begin{claim}
			\label{claim_endpoints_of_ears}
			Vertices $s_i$ and $t_i$ are not adjacent in the graph $H_{i-1}$.
		\end{claim}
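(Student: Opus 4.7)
The plan is to argue by contradiction, exploiting the greedy (longest-ear-first) construction of $ED$. Suppose $e := s_i t_i$ is an edge of $H_{i-1}$. Then $e$ belongs to some ear $P_j$ with $1 \leq j \leq i-1$. Write $P_j$ as a vertex sequence $u_0, u_1, \ldots, u_\ell$ (with $u_0 = u_\ell$ when $j=1$, since $P_1$ is a cycle), and let $e = u_k u_{k+1}$, so that $\{u_k, u_{k+1}\} = \{s_i, t_i\}$.

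The key step is to form a new candidate ear $P_j'$ by deleting $e$ from $P_j$ and splicing in $P_i$:
\[
P_j' \,:\, u_0, u_1, \ldots, u_k, \text{ (along } P_i\text{) }, u_{k+1}, u_{k+2}, \ldots, u_\ell.
\]
I would then verify that $P_j'$ is an admissible ear at step $j$. Its endpoints equal those of $P_j$, hence lie in $V(H_{j-1})$ (and $P_j'$ is a cycle when $j=1$). Its internal vertices are either internal to $P_j$, and so outside $V(H_{j-1})$ by the ear property of $P_j$, or internal to $P_i$, and so outside $V(H_{i-1}) \supseteq V(H_{j-1})$. Its edges lie in $E(P_j) \cup E(P_i)$, so they avoid $E(H_{j-1})$. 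Because $V(P_i) \cap V(P_j) = \{s_i, t_i\}$ and the internal vertices of $P_i$ are fresh with respect to $H_{i-1}$, the walk $P_j'$ has no repeated vertices and is therefore a genuine path (or simple cycle when $j=1$).

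Since $H$ is minimally $2$-connected, every ear has at least one internal vertex (a length-one ear could be removed without destroying $2$-connectivity), so $|E(P_i)| \geq 2$ and hence
\[
|E(P_j')| = |E(P_j)| - 1 + |E(P_i)| > |E(P_j)|.
\]
This contradicts the choice of $P_j$ as the longest ear addable after $H_{j-1}$ (respectively, the longest cycle in $H$ when $j=1$), completing the proof. The only fiddly point is the boundary case in which $s_i$ or $t_i$ coincides with an endpoint of $P_j$ (so $k=0$ or $k+1=\ell$); both coincidences cannot occur simultaneously, since that would force $|E(P_j)| = 1$, contradicting minimality, and in any remaining boundary case the splicing still yields a simple path from $s_j$ to $t_j$, so the argument is unaffected.
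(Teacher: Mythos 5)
Your proof is correct and follows essentially the same route as the paper: assume the edge $s_it_i$ lies in some earlier ear $P_j$, delete that edge, splice in $P_i$ to form the longer admissible ear $(P_j\setminus e)\cup P_i$ at step $j$, and contradict the longest-ear-first choice of $ED$ (using minimal $2$-connectivity to guarantee $|E(P_i)|\geq 2$). You simply spell out the well-definedness checks that the paper leaves implicit.
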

		\begin{proof}
			Let us assume the opposite. There is an edge $e=s_i t_i$ in $H_{i-1}$. Therefore there is some ear $P_j \in ED$, where $j < i$, containing $e$. According to the ordering of ears in $ED$, in every step we added the longest possible ear to our graph. But instead of $P_j$ we could add a longer ear $(P_j \setminus e) \cup P_i$, which contradicts the definition of $ED$. 
		\end{proof}
		
		Claims \ref{claim_touching_ears_the_same_number_of_edges} and \ref{claim_touching_ears_different_number_of_edges} shows the relationship between positions of the endpoints of ears with two or three edges.
		
		\begin{claim}
			\label{claim_touching_ears_the_same_number_of_edges}
			Let $P_i$ and $P_j$ be ears from $ED$ such that both have two edges or both have three edges, where $j < i$. Then no endpoint of $P_i$ can be an internal vertex of $P_j$.
		\end{claim}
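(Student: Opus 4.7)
The plan is to derive a contradiction from the greedy ``longest ear'' choice in $ED$, by induction on $i-j$. Suppose $P_i,P_j$ both have length $m \in \{2,3\}$, that $j<i$, and (without loss of generality) that $s_i$ is an internal vertex of $P_j$; by Claim \ref{claim_ears} every ear $P_r$ with $j \le r \le i$ then has length exactly $m$. I would split into three cases according to the location of $t_i$.

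First, if $t_i \in V(H_{j-1})$, let $a,b$ be the endpoints of $P_j$ and let $P_j'$ denote the subpath of $P_j$ from $a$ to $s_i$. Then $P_j' \cup P_i$ is a path from $a$ to $t_i$ whose endpoints lie in $V(H_{j-1})$ while all of its internal vertices (the internal vertices of $P_j'$, the vertex $s_i$, and the internal vertices of $P_i$) lie outside $V(H_{j-1})$; its edges avoid $E(H_{j-1})$ since $E(P_i)\cap E(H_{i-1})=\emptyset$ and $H_{j-1}\subseteq H_{i-1}$. Hence this is a legal ear candidate at step $j$ of length $|P_j'|+m \ge 1+m > m = |P_j|$, contradicting the choice of $P_j$.

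Second, if $t_i$ is also internal to $P_j$, then $P_j$ contains at least two internal vertices, which forces $m=3$ (in the case $m=2$ the ear $P_j$ has only one internal vertex, giving $s_i=t_i$, impossible). Depending on the order of $s_i,t_i$ along $P_j$, one of $(a,s_i)\cup P_i \cup (t_i,b)$ or $(a,t_i)\cup P_i \cup (s_i,b)$ is a valid ear candidate at step $j$ of length $1+3+1=5>3$, again contradicting the greedy choice.

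Third, if $t_i\notin V(H_{j-1}) \cup V(P_j)$, then $t_i$ is internal to some ear $P_k$ with $j<k<i$, and $|P_k|=m$ as well. Then $(P_i,P_k)$ is a counterexample with strictly smaller gap $i-k<i-j$, which contradicts the induction hypothesis (the base case $i-j=1$ making this subcase vacuous). The main delicate point I expect is just verifying cleanly in Cases 1 and 2 that the proposed path really is a candidate ear at step $j$---edges disjoint from $E(H_{j-1})$ and internal vertices outside $V(H_{j-1})$---which reduces to the defining property $V(P_i)\cap V(H_{i-1})=\{s_i,t_i\}$ combined with $H_{j-1}\subseteq H_{i-1}$.
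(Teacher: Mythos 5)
Your proof is correct, and it runs on the same engine as the paper's: whenever an endpoint of $P_i$ sits inside $P_j$, you manufacture a candidate ear at step $j$ that is strictly longer than $P_j$, contradicting the greedy ``longest ear'' choice. The difference lies in how the second endpoint $t_i$ is handled. The paper uses a single unified construction: it deletes the edge $s_it_j$ from $P_j$, appends $P_i$, and then appends a shortest path $R$ from $t_i$ back to $H_{j-1}$ using only edges of later ears; your Cases 1 and 2 are the situations where $R$ is empty, and your Case 3 replaces the construction of $R$ by an induction on $i-j$ (if $t_i$ escapes into a later ear $P_k$, then $(P_i,P_k)$ is a violating pair with smaller gap, since Claim \ref{claim_ears} forces $|P_k|=m$). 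Your induction is arguably cleaner, as it sidesteps verifying that $R$ exists and is internally disjoint from $H_{j-1}$, which the paper leaves implicit; conversely, the paper dispatches your Case 2 for free via Claim \ref{claim_endpoints_of_ears} (the two internal vertices of a $3$-edge ear are adjacent in $H_{i-1}$), whereas you build the length-$5$ ear explicitly---both work. One small point to patch in your Case 1: you must choose the endpoint $a$ of $P_j$ with $a\neq t_i$ (always possible, since $P_j$ has two distinct endpoints and at most one can equal $t_i$); otherwise $P_j'\cup P_i$ closes up into a cycle rather than a path and is not a legal ear.
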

		
		\begin{proof}
			Let us assume the opposite. From Claim \ref{claim_endpoints_of_ears} we know that the endpoints of any ear cannot be adjacent in $H$. Therefore the only possibility here is that exactly one endpoint of $P_i$, say $s_i$, is an internal vertex of $P_j$. Without loosing the generality of our consideration we can assume that $s_i$ is adjacent to $t_j$ in $H$. Let $r$ be some vertex from $H_{j-1}$ different than $s_j$. Let $R$ be the shortest path between $t_i$ and $r$ in the graph $H_{i-1}$ which contains edges only from $E(H_{i-1}) \setminus E(H_j)$ (see Figure \ref{figure_claim_3_ears}). Note that $R$ is an empty path if $t_i \in V(H_{j-1})$ and $R$ has at least $1$ edge in the other cases. Now consider the ear $(P_j \setminus (s_it_j)) \cup P_i \cup R$. It has more edges than $P_j$ and has both endpoints in $V(H_{j-1})$, so it could be added to the graph $H_{j-1}$. This means that in the $j$-th step we did not add the longest possible ear, which contradicts the definition of $ED$.
		\end{proof}
		
		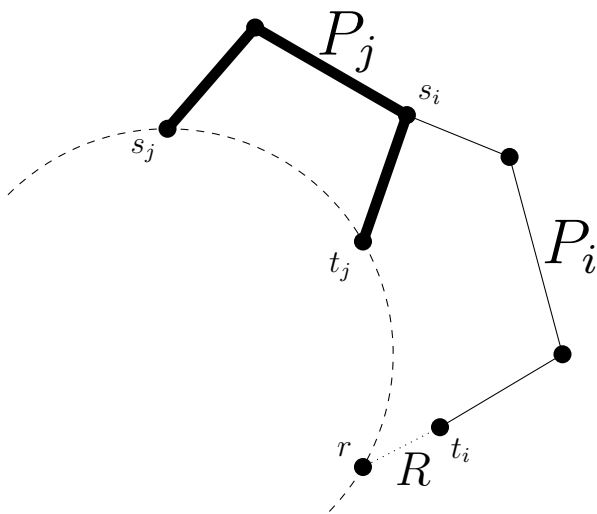
\begin{figure}[th]
			\centering
			\begin{tikzpicture}[scale=1.5]
				\draw[dashed] (135:2) arc (135:-45:2);
				
				\coordinate (V11) at (90:2);
				\coordinate (V12) at (75:3);
				\coordinate (V13) at (45:3);
				\coordinate (V14) at (30:2);
				
				\node[above right] at (V13) {$s_i$};
				\node[below left] at (V14) {$t_j$};
				\node[below left] at (V11) {$s_j$};
				\node[scale=2] at(60:3.2) {$P_j$};
				
				\foreach \i in {1, 2, 3, 4}
				{
					\path[fill=black] (V1\i) circle (0.08);
				}
				
				\foreach \i/\j in {1/2, 2/3, 3/4}
				{
					\path[draw][line width=4pt](V1\i) -- (V1\j);
				}
				
				\coordinate (V21) at (30:3.5);
				\coordinate (V22) at (0:3.5);
				\coordinate (V23) at (-15:2.5);
				\coordinate (V24) at (-30:2);
				
				\node[below right] at (V23) {$t_i$};
				\node[above left] at (V24) {$r$};
				\node[scale=2] at(15:3.7) {$P_i$};
				\node[scale=1.5] at(-25:2.4) {$R$};
				
				\foreach \i in {1, 2, 3, 4}
				{
					\path[fill=black] (V2\i) circle (0.08);
				}
				
				\foreach \i/\j in {1/2, 2/3}
				{
					\path[draw](V2\i) -- (V2\j);
				}
				\path[draw](V13) -- (V21);
				\path[draw][dotted](V23) -- (V24);
			\end{tikzpicture}
			\caption{An example of an impossible case, where one endpoint of an ear $P_i$ with $3$ edges is an internal vertex of some other ear $P_j$ with $3$ edges ($H_{j-1}$ is dashed, thick lines depict $P_j$, normal lines depict $P_i$, dotted lines depict $R$).}
			\label{figure_claim_3_ears}
			
		\end{figure}
		
		\begin{claim}
			\label{claim_touching_ears_different_number_of_edges}
			Let $P_i$ and $P_j$ be ears from $ED$ such that $j < i$, $P_i$ has two edges, $P_j$ has three edges and one endpoint of $P_i$, say $s_i$, is an internal vertex of $P_j$. Then $t_i$ (the second endpoint of $P_i$) must be an endpoint of $P_j$ not adjacent to $s_i$ in $H_{j}$.
			
		\end{claim}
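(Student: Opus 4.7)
The plan is to argue by contradiction, following the maximality paradigm from the proof of Claim~\ref{claim_touching_ears_the_same_number_of_edges}. First, label the vertices of $P_j$ in order as $s_j, a, b, t_j$ and assume without loss of generality that $s_i = a$ (swap the roles of $s_j$ and $t_j$ otherwise). Under this labeling, $s_j$ is the endpoint of $P_j$ adjacent to $s_i$ in $H_j$ (via the edge $s_j a$ of $P_j$), while $t_j$ is not adjacent to $s_i = a$ in $H_j$, since $a \notin V(H_{j-1})$. Thus the statement reduces to showing $t_i = t_j$.

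Suppose for contradiction $t_i \neq t_j$. By Claim~\ref{claim_endpoints_of_ears}, $s_i$ and $t_i$ are non-adjacent in $H_{i-1}$, which excludes $t_i \in \{s_j, b\}$ (since both are neighbors of $a = s_i$ in $P_j$). Together with $t_i \neq a$ and $t_i \neq t_j$, we conclude $t_i \notin V(P_j)$.

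I would then build a candidate replacement ear $Q$ for $P_j$ at step $j$: concatenate the sub-path of $P_j$ from $t_j$ to $a$ (two edges, through $b$), the ear $P_i$ traversed from $a = s_i$ to $t_i$ (two more edges), and finally a shortest path $R$ in $H_{i-1}$ from $t_i$ to some vertex $r \in V(H_{j-1})$ using only edges in $E(H_{i-1}) \setminus E(H_j)$ (taking $R$ trivial with $r = t_i$ when $t_i \in V(H_{j-1})$), exactly as in Claim~\ref{claim_touching_ears_the_same_number_of_edges}. The resulting path $Q$ from $t_j$ to $r$ has length at least $4 > 3 = |E(P_j)|$, its endpoints lie in $V(H_{j-1})$, and its internal vertices (namely $b$, $a$, the internal vertex of $P_i$, $t_i$, and the internal vertices of $R$) all lie outside $V(H_{j-1})$. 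Hence $Q$ could have been added in place of $P_j$ at step $j$, contradicting the defining property of $ED$ that $P_j$ is the longest ear available at that step.

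The main obstacle is the same one implicitly handled in the proof of Claim~\ref{claim_touching_ears_the_same_number_of_edges}: verifying that $Q$ is a genuine simple path, which amounts to ensuring that $R$ exists and can be chosen to internally avoid both $V(H_{j-1})$ and the set $\{a, b, t_i\}$ already used in $Q$. The shortest-path choice of $R$ forbids internal visits to $V(H_{j-1})$, and the freshness of each ear's internal vertices at the moment of its addition handles avoidance of $\{a, b\}$ and the interior of $P_i$; this precisely parallels the corresponding step of the earlier claim.
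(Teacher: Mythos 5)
Your proof is correct and follows essentially the same route as the paper's: it excludes $t_i \in V(P_j)\setminus\{t_j\}$ via Claim~\ref{claim_endpoints_of_ears} and then, for $t_i \notin V(P_j)$, builds the longer replacement ear $(P_j \setminus s_is_j) \cup P_i \cup R$ exactly as in Claim~\ref{claim_touching_ears_the_same_number_of_edges}. The only nuance the paper adds is insisting that the terminal vertex $r$ of $R$ be different from $t_j$, so that the replacement ear has distinct endpoints and is a genuine open ear.
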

		\begin{proof}
			Without loosing the generality of our consideration we can assume that $s_j$ is adjacent to $s_i$ in $H_{j}$. Note that if $t_i$ is the same as $t_j$ then our assumptions about the order of the ears in $ED$ are satisfied; see Figure \ref{figure_claim_2_3_ears}. So it remains to show that $t_i$ cannot be different from $t_j$. There are two cases to consider.
			If $t_i$ were a vertex of $P_j$ different from $t_j$, then vertices $s_i$ and $t_i$ would be adjacent in the graph $H_{i-1}$, which contradicts Claim \ref{claim_endpoints_of_ears}. If $t_i$ were from $H_{i-1} \setminus P_j$ it would mean that we added the ear $P_j$ to $H_{j-1}$ when we could add a longer ear. To construct this ear we could take the shortest path between $t_i$ and some vertex from $H_{j-1}$ different than $t_j$ which contains edges only from $E(H_{i-1}) \setminus E(H_j)$ and append to it $(P_i \cup P_j \setminus (s_is_j)))$ (the construction is analogous as in the proof of Claim \ref{claim_touching_ears_the_same_number_of_edges}). This contradicts the definition of $ED$. Therefore the proof of the claim is complete.
		\end{proof}
		
		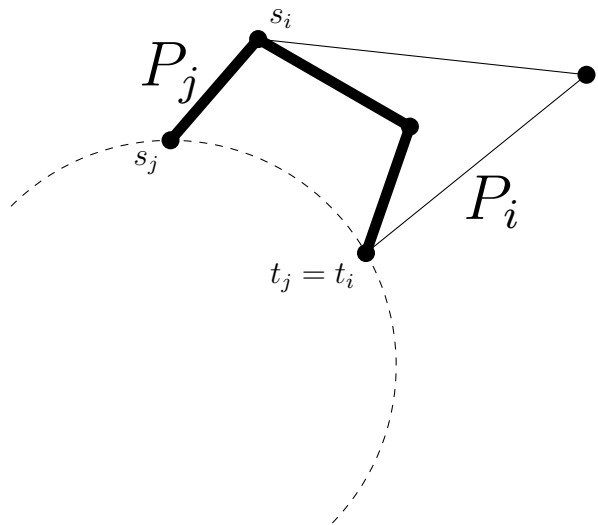
\begin{figure}[th]
			\centering
			\begin{tikzpicture}[scale=1.5]
				\draw[dashed] (135:2) arc (135:-45:2);
				
				\coordinate (V11) at (90:2);
				\coordinate (V12) at (75:3);
				\coordinate (V13) at (45:3);
				\coordinate (V14) at (30:2);
				
				\node[above right] at (V12) {$s_i$};
				\node[below left] at (V14) {$t_j = t_i$};
				\node[below left] at (V11) {$s_j$};
				\node[scale=2] at(90:2.6) {$P_j$};
				
				\foreach \i in {1, 2, 3, 4}
				{
					\path[fill=black] (V1\i) circle (0.08);
				}
				
				\foreach \i/\j in {1/2, 2/3, 3/4}
				{
					\path[draw][line width=4pt](V1\i) -- (V1\j);
				}
				
				\coordinate (V21) at (35:4.5);
				
				\node[scale=2] at(27:3.2) {$P_i$};
				
				\path[fill=black] (V21) circle (0.08);
				
				\path[draw](V12) -- (V21);
				\path[draw](V14) -- (V21);
			\end{tikzpicture}
			\caption{Ears $P_i, P_j \in ED$ such that $P_i$ has $2$ edges, $P_j$ has $3$ edges ($H_{j-1}$ is dashed, thick lines depict $P_j$ and normal lines depict $P_i$).}
			\label{figure_claim_2_3_ears}
		\end{figure}
		

		Let $i^*$ be a number defined such that $P_{i^*}$ is the last ear in $ED$ with at least three edges. Let us define a sequence of oriented graphs $D_0, D_1, \ldots, D_{i^*}$ such that $D_0$ is empty and for each $i>0$, $D_i$ is obtained from $D_{i-1}$ by adding the path $P_i$, oriented from $s_i$ to $t_i$. Take $D$ to be the last oriented graph in this sequence, i.e. $D:=D_{i^*}$.
		
		Now we will color the edges of $D$ with $5$ colors such that for every two vertices $u$, $v$ there is a strongly proper \emph{directed} path from $u$ to $v$, which is a strengthening of the property required from edge-coloring of $H$. In the following claim (iii) is the crucial part, while (i), (ii) and (iv) are invariants needed in an inductive proof.
		
		\begin{claim}
			\label{claim_coloringOfD}
			There exists a coloring $C$ such that for every $i\leq i^*$, the restriction of $C_i$ to the edges of $D_i$ satisfies the following properties:
			\begin{itemize}
				\item[(i)] For every $v\in V(D_i)$, all edges going out of $v$ have the same color.
				\item [(ii)] For every $v\in V(D_i)$, all edges going into $v$ have the same color, other than the color of edges going out of $v$.
				\item [(iii)] For every two vertices $u,v\in V(D_i)$, there exists a strongly proper path from $u$ to $v$ and from $v$ to $u$.
				\item [(iv)] For every edge $uv\in E(D_i)$, such that $u$ and $v$ are not internal vertices of the same ear with $3$ edges, and for every two directed edges $xu, vy \in E(D_i)$, we have $C_i(xu) \neq C_i(vy)$. 
			\end{itemize}
		\end{claim}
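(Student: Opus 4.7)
The plan is to prove the claim by induction on $i$, extending the coloring ear by ear in analogy with the proof of Theorem \ref{theorem_ps_3_cycles}. Since cycle lengths are no longer assumed to be divisible by $3$, the canonical pattern on the three colors $\{1,2,3\}$ will sometimes fail to close up consistently, so I would use the two additional colors $\{4,5\}$ as corrections placed deep inside each ear.

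For the base case $i = 1$, the digraph $D_1$ is a single directed cycle of length $n \ge 3$. I would color its edges by laying down the canonical pattern $1,2,3,1,2,3,\ldots$ and, if $n \not\equiv 0 \pmod 3$, replacing one or two edges far from the wraparound by colors $4$ or $5$ so that every three consecutive edges get three distinct colors and no two edges at distance two around the cycle share a color; with five colors this is routine. Because every vertex of the cycle has exactly one incoming and one outgoing edge, properties (i), (ii), and (iv) follow immediately, while (iii) amounts to saying that every directed arc is strongly proper, which is ensured by construction.

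For the inductive step, I assume $D_{i-1}$ satisfies (i)--(iv) and consider adding the oriented ear $P_i$ of length $p_i \ge 3$. By (i) there is a single color $\alpha$ on the outgoing edges of $s_i$; by (ii) a single color $\beta \ne \alpha$ on the incoming edges of $t_i$. To preserve (i) and (ii) at $s_i$ and $t_i$, I would color the first edge of $P_i$ by $\alpha$ and the last edge by $\beta$. For the $p_i - 2$ interior edges I would continue the canonical pattern from $\alpha$; if the pattern does not end at $\beta$, I would substitute one or two interior edges by color $4$ or $5$ so that the resulting sequence is strongly proper and terminates at $\beta$. Claims \ref{claim_touching_ears_the_same_number_of_edges} and \ref{claim_touching_ears_different_number_of_edges}, together with the longest-ear strategy from Claim \ref{claim_ears}, guarantee enough room inside $P_i$ to place these corrections without violating (iv) at the newly colored edges, and they also restrict the short ears that can touch $P_i$ so that the boundary conditions at $s_i$ and $t_i$ remain consistent.

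The main obstacle is verifying (iii): after inserting $P_i$ every ordered pair of vertices must be joined by a directed strongly proper path. Following the structure of the proof of Theorem \ref{theorem_ps_3_cycles}, I would split into cases according to where the endpoints $u, v$ lie: both in $D_{i-1}$, both interior to $P_i$, or one of each. Within each case the verification reduces to showing that concatenating a path of $D_{i-1}$ with (a piece of) $P_i$ preserves strong properness at the joint; here the boundary conditions (i), (ii), and the "shielding" provided by (iv) around any correction edge colored $4$ or $5$ are what prevent a forbidden three-in-a-row repeat from arising. The most delicate situation is when $u$ and $v$ are both interior to $P_i$ but the shortest directed route must exit through $t_i$ and re-enter through $s_i$; there I would mimic the auxiliary-vertex construction ($w_1, w_2$) used in the proof of Theorem \ref{theorem_ps_3_cycles}, leveraging (iii) for $D_{i-1}$ to stitch the canonical patterns on the two arcs of $P_i$ together with the inner path.
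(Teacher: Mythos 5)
Your overall strategy is the paper's: induct over the ears in the order given by the longest-ear decomposition, force the first edge of $P_i$ to match the out-color of $s_i$ and the last edge to match the in-color of $t_i$ (preserving (i) and (ii)), fill the interior so that each edge avoids the two preceding and two following edges, and verify (iii) by splicing a strongly proper path of $D_{i-1}$ onto a fragment of $P_i$. The paper simply colors the interior greedily (five colors always suffice since each edge has at most four constraints), so your ``canonical pattern plus patches from $\{4,5\}$'' is an unnecessary complication but not wrong. Two points in your sketch, however, would fail as written. First, you cannot always make the whole colored sequence on $P_i$ strongly proper: for a $3$-edge ear the first and last edges are forced to the out-color of $s_i$ and the in-color of $t_i$ respectively, and nothing prevents these from coinciding. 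This is precisely why (iv) excludes edges whose endpoints are both internal to a $3$-edge ear, and why the verification of (iii) must only ever use fragments of $P_i$ starting at $s_i$ or ending at $t_i$, never the full ear as a sub-path of a longer connecting path. Your plan does not account for this, and a proof that claims the entire ear is strongly proper would be incorrect.

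Second, you misattribute the role of Claims \ref{claim_touching_ears_the_same_number_of_edges} and \ref{claim_touching_ears_different_number_of_edges}: they have nothing to do with ``room inside $P_i$ for corrections.'' The genuinely delicate step in (iii) is the splice point: when you concatenate a strongly proper path $Q$ ending at $s_i$ with an initial fragment of $P_i$, the pair needing attention is the \emph{second-to-last} edge of $Q$ versus the \emph{first} edge of $P_i$ (distance two on the composed path). This conflict is excluded by invoking (iv) in $D_{i-1}$ for the last edge of $Q$, and (iv) is only applicable there because Claim \ref{claim_touching_ears_the_same_number_of_edges} guarantees $s_i$ is not an internal vertex of any $3$-edge ear. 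That is the actual use of those claims inside this induction (their other use comes later, when the $2$-edge ears are oriented and colored). Finally, for two interior vertices of $P_i$ with the ``wrong'' order, the $w_1,w_2$ device from Theorem \ref{theorem_ps_3_cycles} does not transfer (it relies on all relevant lengths being divisible by $3$); the correct route is simply the fragment of $P_i$ to $t_i$, a strongly proper $t_i$--$s_i$ path in $D_{i-1}$ from the inductive (iii), and the fragment from $s_i$, with both junctions handled by the same (a)/(b)/(ii)/(iv) analysis. You also never verify that (iv) itself is preserved, which the paper must (and does) check case by case.
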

		\begin{proof}
			We will construct the coloring $C$ by inductively defining $C_i$ for $i=1, 2, \ldots i^*$. 
			
			The base case of the induction is when $D_1$ is a directed cycle. It suffices to color the edges of $D_1$ greedily, making sure that each edge is colored differently than the two preceding and the two following edges.
			
			Now consider any $i \leq i^*$ and suppose that we already have the coloring $C_{i-1}$ which satisfies properties (i) -- (iv) (for $i-1$). For convenience let us consider a directed path $P'_i$ obtained from $P_i$ by directing it from $s_i$ to $p_i$ and adding two edges $ws_i$ and $t_ix$ from $D_{i-1}$, for arbitrarily chosen $w$ and $x$ (see Figure \ref{figure_p_and_p_prim}).
			
			\begin{figure}[th]
				\centering
				\begin{subfigure}[b]{0.5\textwidth}
					\centering
					\begin{tikzpicture}[scale=3.5]
						
						\coordinate (V00) at (100:1);
						\coordinate (V01) at (70:1);
						\coordinate (V02) at (5:1);
						\coordinate (V03) at (-25:1);
						\coordinate (V04) at (90:0.5);
						\coordinate (V05) at (-15:0.5);
						
						\node[above = 0.1cm] at (V01) {$s_i$};
						\node[right = 0.1cm] at (V02) {$t_i$};
						\node at(40:1.65) {$P_i$};
						
						\draw[dashed] (130:1) arc (130:100:1);
						\draw[-{Latex[length=3mm]}, dashed] (100:1) arc (100:70:1);
						\draw[-{Latex[length=3mm]}, dashed] (70:1) arc (70:35:1);
						\draw[ dashed] (35:1) arc (35:5:1);
						\draw[-{Latex[length=3mm]}, dashed] (5:1) arc (5:-25:1);
						\draw[dashed] (-25:1) arc (-25:-55:1);
						
						\foreach \i in {0, 1, 2, 3, 4, 5}
						{
							\path[fill=black] (V0\i) circle (0.04);
						}
						\foreach \i in {1, 2, 3, 4}
						{
							\coordinate (V1\i) at (45 - \i * 360/32 + 360/16:1.5);
							\path[fill=black] (V1\i) circle (0.04);
						}
						\foreach \i/\j in {1/2, 2/3, 3/4}
						{
							\path[draw](V1\i) -- (V1\j);
						}
						\path[draw](V01) -- (V11);
						\path[draw](V14) -- (V02);
						
						\path[draw][-{Latex[length=3mm]}, dashed] (V04) -- (V01);
						\path[draw][-{Latex[length=3mm]}, dashed] (V02) -- (V05);
						
					\end{tikzpicture}
					\caption{A fragment of the graph $D_{i-1}$ (dashed) and the path $P_i$ (normal lines).}
					\label{figure_p_and_p_prim_a}
				\end{subfigure}%
				\begin{subfigure}[b]{0.5\textwidth}
					\centering
					\begin{tikzpicture}[scale=3.5]
						
						\coordinate (V00) at (100:1);
						\coordinate (V01) at (70:1);
						\coordinate (V02) at (5:1);
						\coordinate (V03) at (-25:1);
						\coordinate (V04) at (90:0.5);
						\coordinate (V05) at (-15:0.5);
						
						\node[above = 0.1cm] at (V00) {$w$};
						\node[above =0.1cm] at (V01) {$s_i$};
						\node[right = 0.1cm] at (V02) {$t_i$};
						\node[right = 0.1cm] at (V03) {$x$};
						\node at(40:1.65) {$P'_i$};

						\draw[dashed] (130:1) arc (130:100:1);
						\draw[-{Latex[length=3mm]}, line width=3pt] (100:1) arc (100:70:1);
						\draw[-{Latex[length=3mm]}, dashed] (70:1) arc (70:35:1);
						\draw[ dashed] (35:1) arc (35:5:1);
						\draw[-{Latex[length=3mm]}, line width=3pt] (5:1) arc (5:-25:1);
						\draw[dashed] (-25:1) arc (-25:-55:1);
						
						\foreach \i in {0, 1, 2, 3, 4, 5}
						{
							\path[fill=black] (V0\i) circle (0.04);
						}
						\foreach \i in {1, 2, 3, 4}
						{
							\coordinate (V1\i) at (45 - \i * 360/32 + 360/16:1.5);
							\path[fill=black] (V1\i) circle (0.04);
						}
						\foreach \i/\j in {1/2, 2/3, 3/4}
						{
							\path[draw, line width=3pt][-{Latex[length=3mm]}](V1\i) -- (V1\j);
						}
						\path[draw, line width=3pt][-{Latex[length=3mm]}](V01) -- (V11);
						\path[draw, line width=3pt][-{Latex[length=3mm]}](V14) -- (V02);
						
						\path[draw][-{Latex[length=3mm]}, dashed] (V04) -- (V01);
						\path[draw][-{Latex[length=3mm]}, dashed] (V02) -- (V05);
					\end{tikzpicture}
					\caption{The directed path $P'_i$ (thick lines).}
					\label{figure_p_and_p_prim_b}
				\end{subfigure}%
				\caption{Constructing a path $P'_i$ for a given directed graph $D_{i-1}$ and a path $P_i$. }
				\label{figure_p_and_p_prim}
			\end{figure}
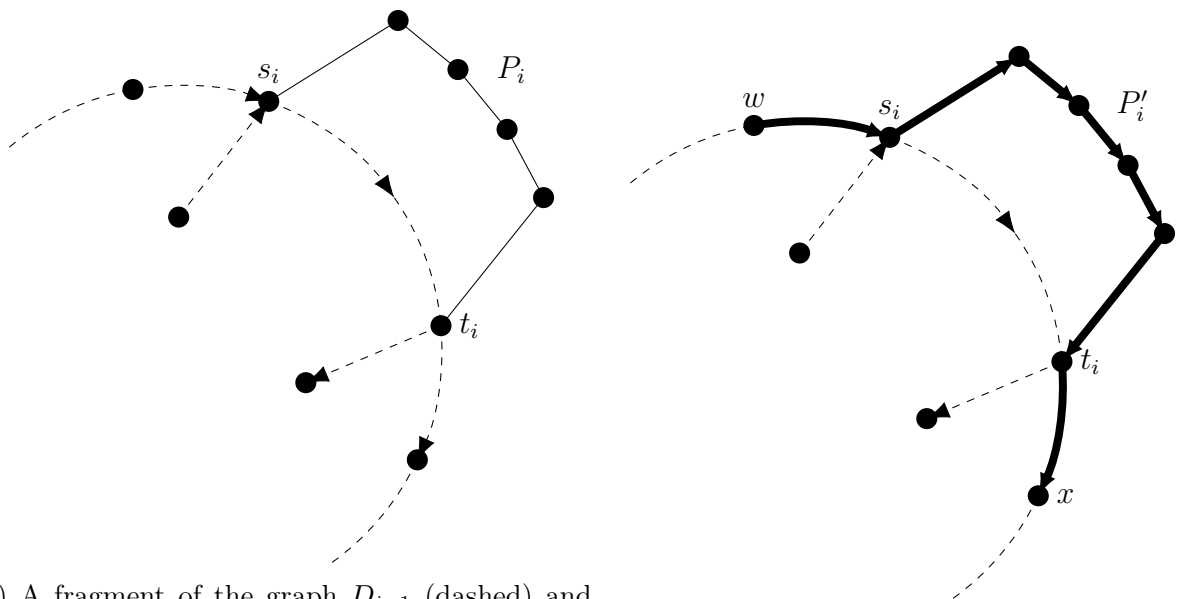
			
			Now we will construct $C_i$ from $C_{i-1}$ by coloring the internal edges of $P'_i$ such that
			\begin{itemize}
				\item [(a)] the second edge of $P'_i$ has the same color as the edges going out of $s_i$ in $D_{i-1}$,
				\item [(b)] the penultimate edge of $P'_i$ has the same color as the edges going into $t_i$ in $D_{i-1}$,
				\item [(c)] the remaining edges of $P'_i$ are colored with a different color than the two preceding and the two following edges on $P'_i$ (note that it is possible, as we have 5 colors in use).
			\end{itemize}  
			
			We will prove that $C_i$ satisfies properties (i) -- (iv).
			
			It is clear that properties (i) and (ii) remain satisfied for vertices from $V(D_{i}) \setminus V(P_i)$. They are also satisfied for $s_i$ and $t_i$ by (a) and (b), and for the other vertices of $P_i$, as they have indegree and outdegree $1$ in $D_i$.
			
			Now consider the property (iii). It remains satisfied if $u, v\in V(D_{i-1})$. If both $u$ and $v$ are internal vertices of $P_i$, then (iii) holds by (c). 
			
			Now consider the case when $u\in V(D_{i-1})$ and $v$ is an internal vertex of $P_i$. The required $u-v$ path is obtained by composing a strongly proper path $Q$ from $u$ to $s_i$ in $D_{i-1}$ (note that $Q$ exists by (iii) applied for $i-1$) with a fragment of $P_i$ from $s_i$ to $v$. We will show that this path is strongly proper. Since both $Q$ and a fragment of $P_i$ are strongly proper, we only need to exclude color conflicts between four\footnote{This statement is technically incorrect if $Q$ has one or zero edges. However, we ignore this case as it is much easier.} edges around $s_i$. First edge of $P_i$ has different color than the last edge of $Q$ by (a) and (ii). Second edge of $P_i$ is colored differently than the last edge of $Q$ by (c). For the remaining pair note that from Claim \ref{claim_touching_ears_the_same_number_of_edges} we know that $s_i$ is not an internal vertex of any ear $P_j$ with $3$ edges, where $j < i$. It follows that (iv) holds for the last edge of $Q$ in the coloring $C_{i-1}$. It follows that last but one edge of $Q$ is colored differently than edges going out of $s_i$ in $D_{i-1}$, hence by (a) there is no conflict with the first edge of $P_i$. 
			
			In the remaining case when $u$ is an internal vertex of $P_i$ and $v\in V(D_{i-1})$ the argument is analogous (the required path is obtained by composing a fragment of $P_i$ from $u$ to $t_i$ and a strongly proper path from $t_i$ to $v$). Therefore, the proof of (iii) is complete.
			
			Now consider the property (iv) that involves three edges $xu$, $uv$ and $vy$. It remains satisfied if both $u$ and $v$ are in $V(D_i)\setminus V(P_i)$. If both $u$ and $v$ are internal vertices of $P_i$, we only need to consider the case when $P_i$ has at least $4$ edges; in this case (iv) follows directly from (c).  If $u = s_i$ or $v = t_i$ the property (iv) follows directly from (c) again. Now consider the case $v=s_i$ or $u=t_i$. Let $y'$ be an out-neighbor of $v$ in $D_{i-1}$ and $x'$ be an in-neighbor of $u$ in $D_{i-1}$ (note an easy case when $x'=x$ and $y'=y$). By the induction assumption (iv) it holds that $C_{i-1}(x'u)\neq C_{i-1}(vy')$, and by (a) and (b) it follows that $C_i(vy)=C_{i-1}(vy')$ and $C_i(xu)=C_{i-1}(x'u)$, which completes the proof of (iv).
			
			Therefore, the proof of the claim is complete by induction.
		\end{proof}
		
		Before coloring the edges of $H$, we need to orient ears with two edges. Let $P_i$ be an ear with two edges. We pick $s_i'$ and $t_i'$ from $\lbrace s_i, t_i\rbrace$ such that no edge going into $s_i'$ is an internal edge of an ear with three edges and no edge going out of $t_i'$ is an internal edge of an ear with three edges. Such a choice is possible because by Claim \ref{claim_touching_ears_different_number_of_edges} if one of the vertices from $\lbrace s_i, t_i\rbrace$ is an internal vertex of some ear $P_j$ with $3$ edges, then the other one is an endpoint of $P_j$, and hence by Claim \ref{claim_touching_ears_the_same_number_of_edges} it is not an internal vertex of any other ear with $3$ edges. We will think of the ear $P_i$ as oriented from $s_i'$ to $t_i'$.
		
		Now let $C$ be a $5$-coloring of edges of $D$ given by Claim \ref{claim_coloringOfD}. Define a $5$-coloring $C'$ of edges of $H$ such that $C'(uv)=C(uv)$ for every edge $uv\in E(D)$ and for every vertex $v$ such that $v$ is an internal vertex of an ear $P_i$ with two edges, let $C'(s_i'v)$ be the color of edges going out of $s_i'$ in $C$ and $C'(vt_i')$ be the color of edges going into $t_i'$ in $C$. Note that this definition is correct, since by Claim \ref{claim_touching_ears_the_same_number_of_edges} $s_i'$ and $t_i'$ are vertices of $D$, and it is unambiguous by Claim \ref{claim_coloringOfD} (i) and (ii). We will show that $C'$ is the desired coloring of $H$.
		
		Consider any two vertices $u,v\in V(H)$. We need to find a strongly proper path between $u$ and $v$. If both $u$ and $v$ are in $V(D)$, such a path exists by Claim \ref{claim_coloringOfD} (iii). If $u,v\notin V(D)$, pick $i$ and $j$ such that $u$ is an internal vertex of the ear $P_i$ and $v$ is an internal vertex of $P_j$ (where $P_i$ and $P_j$ have two edges). In this case the desired path between $u$ and $v$ is obtained by taking a strongly proper path from $t_i'$ to $s_j'$ in $D$ (which exists by Claim \ref{claim_coloringOfD} (iii)) and appending to it the edge $ut_i'$ at the start and the edge $s_j'v$ at the end; let us denote the constructed path by $P$. We will show that $P$ is strongly proper. If $P$ has less than three edges, it follows directly from Claim \ref{claim_coloringOfD} (ii), so we assume otherwise. By Claim \ref{claim_coloringOfD} (ii) the first and second edge have different colors, and last and last but one edge on $P$ also have different colors. By the choice of $t_i'$, the second edge of $P$ is not an internal edge of any ear with $3$ edges, so by Claim \ref{claim_coloringOfD} (iv) the first and third edge on $P$ have different colors. Similarly, by the choice of $s_j'$, the last but one edge on the constructed path is not an internal edge of any ear with $3$ edges, so $P$ is strongly proper by Claim \ref{claim_coloringOfD} (iv). 
		
		Note that the same argument applies when $u\notin V(D)$ and $v\in V(D)$ or $u\in V(D)$ and $v\notin V(D)$, except that only one end of $P$ needs to be considered. This proves that $H$, together with the constructed coloring $C'$, is strongly proper connected, so the proof of Theorem \ref{theorem_ps2connected} is complete.

	\end{proof}

	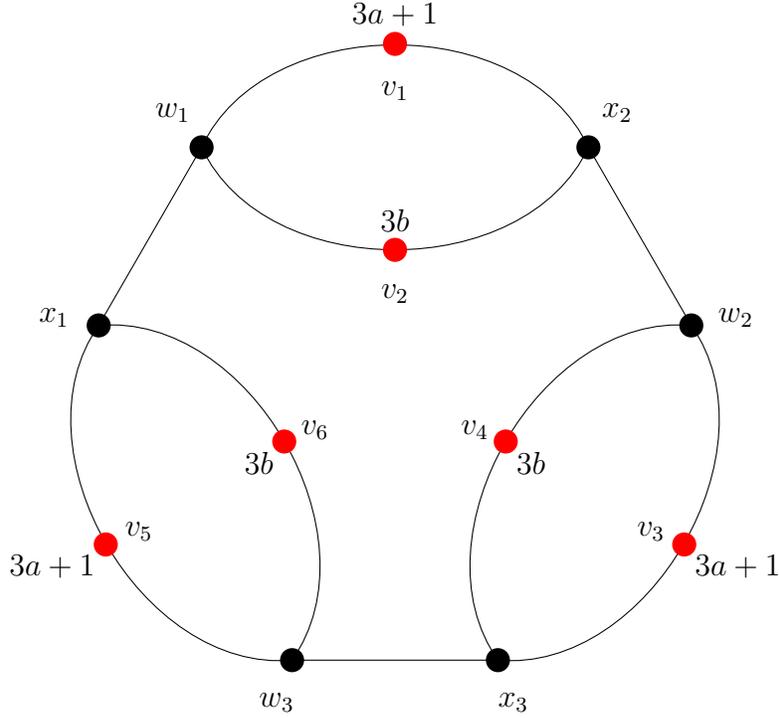
\begin{figure}[th]
		\centering
		\begin{tikzpicture}[scale=4]
			
			\coordinate (V00) at (-90 - 360/18:1);
			\coordinate (V01) at (-90 + 360/18:1);
			\coordinate (V02) at (-90 + 360/18 +360*2/9:1);
			\coordinate (V03) at (-90 + 360/18 +360*3/9:1);
			\coordinate (V04) at (-90 + 360/18 +360*5/9:1);
			\coordinate (V05) at (-90 + 360/18 +360*6/9:1);
			
			\coordinate (V00L) at (-90 - 360/18:1.15);
			\coordinate (V01L) at (-90 + 360/18:1.15);
			\coordinate (V02L) at (-90 + 360/18 +360*2/9:1.15);
			\coordinate (V03L) at (-90 + 360/18 +360*3/9:1.15);
			\coordinate (V04L) at (-90 + 360/18 +360*5/9:1.15);
			\coordinate (V05L) at (-90 + 360/18 +360*6/9:1.15);
			
			\node at (V00L) {$w_3$};
			\node at (V01L) {$x_3$};
			\node at (V02L) {$w_2$};
			\node at (V03L) {$x_2$};
			\node at (V04L) {$w_1$};
			\node at (V05L) {$x_1$};
			
			\foreach \i/\j in {0/1, 2/3, 4/5}
			{
				\path[draw](V0\i) -- (V0\j);
			}
			\draw [bend right=65] (V01) to (V02);
			\draw [bend right=-65] (V01) to (V02);
			\draw [bend right=65] (V03) to (V04);
			\draw [bend right=-65] (V03) to (V04);
			\draw [bend right=65] (V05) to (V00);
			\draw [bend right=-65] (V05) to (V00);
			
			\foreach \i in {0, 1, 2, 3, 4, 5}
			{
				\path[fill=black] (V0\i) circle (0.04);
			}
			
			\coordinate (V0) at (90:1.11);
			\coordinate (V1) at (90:0.425);
			\coordinate (V2) at (210:1.11);
			\coordinate (V3) at (210:0.425);
			\coordinate (V4) at (330:1.11);
			\coordinate (V5) at (330:0.425);
			
			\coordinate (V0R) at (90:1.11*0.8);
			\coordinate (V1R) at (90:0.425*0.5);
			\coordinate (V2R) at (210:1.11*0.8);
			\coordinate (V3R) at (210:0.425*0.5);
			\coordinate (V4R) at (330:1.11*0.8);
			\coordinate (V5R) at (330:0.425*0.5);
			
			\node[above = 0.1cm] at (V0) {$3a+1$};
			\node[above = 0.1cm] at (V1) {$3b$};
			\node[below left] at (V2) {$3a+1$};
			\node[below left] at (V3) {$3b$};
			\node[below right] at (V4) {$3a+1$};
			\node[below right] at (V5) {$3b$};
			
			\foreach \i in {0, 1, 2, 3, 4, 5}
			{
				\path[fill=red] (V\i) circle (0.04);
			}
			\node[above ] at (V0R) {$v_1$};
			\node[above ] at (V1R) {$v_2$};
			\node[below left] at (V2R) {$v_5$};
			\node[below left] at (V3R) {$v_6$};
			\node[below right] at (V4R) {$v_3$};
			\node[below right] at (V5R) {$v_4$};

		\end{tikzpicture}
		\caption{A graph $G_d$ with strong proper connection number equal to $4$.}
		\label{figure_Gd}
	\end{figure}
	
	\subsection{Proof of the lower bound $\spc (G_d)\geq 4$}
	
	\begin{proof}[Proof of Theorem \ref{theorem_4colors}]
		Let $d\geq 3$ and let $a,b\geq \max\{3,d/3\}$ be fixed integers. Consider a graph $G_d$ consisting of three edges, $x_1w_1$, $x_2w_2$, $x_3w_3$, and six paths $P_1, P_2, \ldots, P_6$ such that $P_1$ and $P_2$ go from $w_1$ to $x_2$, $P_3$ and $P_4$ go from $w_2$ to $x_3$, $P_5$ and $P_6$ go from $w_3$ to $x_1$. Moreover, $P_1, P_3, P_5$ have length $3a+1$ and $P_2, P_4, P_6$ have length $3b$ (see Figure \ref{figure_Gd}).
		
		Suppose for the contrary that $\spc(G_d)\leq 3$ and fix an edge coloring of $G_d$ with colors $1$, $2$ and $3$ that makes it strongly proper connected. Choose vertices $v_1, v_2, \ldots, v_6$ such that for each $i$, $v_i$ is a vertex from $P_i$ at distance at least $2$ from both ends of $P_i$, and the path with four edges closest to $v_i$ is strongly proper. Note that such a choice is possible since the edge-colored graph is strongly proper connected, e.g., $v_i$ is a third vertex on a strongly proper path from the central vertex of $P_i$ to $x_1$.
		
		Now define a directed graph $D$ on vertices $v_1, \ldots, v_6$ such that there is an arc $v_iv_j$ if and only if there exists a strongly proper path from $v_i$ to $v_j$ in $G_d$ with the canonical color pattern (a block of the sequence $(1,2,3,1,2,3,\ldots)$). Note that each strongly proper path that uses three colors must exhibit this canonical pattern in both directions. Since the edge-colored graph $G$ is strongly proper, it follows that $D$ contains a tournament as a directed subgraph. Now consider two cases.
		
		{\bf Case 1:} (\emph{$D$ is acyclic}) Note that in this case the vertices of $D$ can be reordered as $u_1, u_2, \ldots, u_6$ so that for $i\in \lbrace 1, 2, \ldots, 5\rbrace$ there exists a strongly proper path from $u_i$ to $u_{i+1}$ with the canonical color pattern. Note that joining all those paths produces a walk $W$ that is strongly edge-colored (i.e. no two consecutive edges and no two edges at distance $2$ on the walk have the same color) -- this follows by the choice of $v_1, v_2, \ldots, v_6$, guaranteeing that no edge on $W$ appears two times in a row. Also, each vertex from $\lbrace v_1, v_2, \ldots, v_6 \rbrace$ appears on $W$ exactly once, as otherwise there would be a cycle in $D$. This is a contradiction with the structure of $G_d$, i.e., there is no walk in $G_d$ that visits each vertex from $\lbrace v_1, v_2, \ldots, v_6 \rbrace$ exactly once and does not contain two consecutive occurrences of some edge.
		
		{\bf Case 2: } (\emph{$D$ contains a directed cycle $u_1u_2\ldots u_k$}). Consider a closed walk $W$ obtained by joining strongly proper paths with the canonical color pattern that go from $u_1$ to $u_2$, from $u_2$ to $u_3$, and so on, up to a path from $u_k$ to $u_1$. Note that, like in the previous case, $W$ is strongly edge-colored and no edge of $G_d$ appears on $W$ two times in a row. 
		
		Let $S=\lbrace x_1, w_1, x_2, w_2, x_3, w_3\rbrace$ and consider consecutive occurrences of vertices from $S$ on $W$. Note that up to natural symmetries (i.e. renaming $x_i$ to $w_i$ and vice versa or rotating names, so that $x_i, w_i$ become $x_{i+1}$ and $w_{i+1}$) there are two cases: either (2a) at some point $w_1$ is followed by $x_2$, followed again by $w_1$ or (2b) $x_i$ is always followed by $w_i$, and $w_1$ is followed by $x_2$, $w_2$ by $x_3$ and $w_3$ by $x_1$.
		
		In case (2a) note that the first occurrence of $w_1$ must be preceded by $x_1$ and the second occurrence of $w_1$ -- followed by $x_1$ (because otherwise $P_1$ and $P_2$ would form a cycle with the canonical color pattern, which is impossible as their total length is not divisible by $3$). However, it implies that the edge $x_1w_1$ occurs on $W$ twice at distance exactly $3a+3b+2$, which is a contradiction, because colors on $W$ must repeat every three edges.
		
		In the remaining case (2b) a part of $W$ from the first occurrence of the edge $x_1w_1$ to its second occurrence must be a strongly edge-colored cycle; denote it by $C$. Since the length of $C$ must be divisible by $3$, it contains either vertices $\lbrace v_1, v_3, v_5 \rbrace$ or $\lbrace v_2, v_4, v_6 \rbrace$. Let $u_1, u_2, u_3$ be vertices from $\lbrace v_1, v_2, \ldots, v_6 \rbrace$ outside $C$. 
		
		Note that $u_i$ may not be incident to both incomming and outgoing arcs in $D$. Indeed, if that was the case, then a part of $C$, together with paths from $C$ to $u_i$ and from $u_i$ to $C$ with canonical color pattern, would form a strongly $3$-edge-colored cycle with length not divisible by $3$, which is a contradiction. However, this implies that $D$ does not contain an arc between two of the vertices from $\lbrace u_1, u_2, u_3\rbrace$ (i.e. there can be no arc between two vertices with outdegree $0$), which contradicts the fact that $D$ contains a tournament. Therefore, the proof is complete.
	\end{proof}
	
	\subsection{Nonrepetitive connected coloring of graphs}
	In this subsection we prove that $4$-connected graphs satisfy $\nrc(G)\leq 6$ and $2$-connected graphs satisfy $\nrc(G)\leq 15$. Actually, we will derive these bounds as simple consequences of more general results.
	
	\begin{theorem}\label{Theorem Spanning Trees}
		Let $G$ be a graph containing two edge disjoint spanning trees. Then $\nrc(G)\leq 6$.
	\end{theorem}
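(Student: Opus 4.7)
My plan is to leverage the two edge-disjoint spanning trees by assigning them two disjoint palettes of three colors each, coloring them by Thue sequences indexed by depth, and then building every connecting path as a concatenation of two ancestor--descendant paths, one in each tree. Concretely, let $T_1$ and $T_2$ be the given edge-disjoint spanning trees of $G$, and root both at a common vertex $r$. By Thue's theorem I can fix a nonrepetitive sequence $w^{(1)}=w^{(1)}_1w^{(1)}_2\cdots$ over the palette $\{1,2,3\}$ and another nonrepetitive sequence $w^{(2)}$ over $\{4,5,6\}$. I would then color the edge of $T_i$ whose farther endpoint from $r$ lies at depth $d$ by the color $w^{(i)}_d$, and color all remaining edges of $G$ arbitrarily; this uses at most $6$ colors in total. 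The key feature of this scheme is that along any path in $T_i$ from a vertex to one of its ancestors (or its reverse), the induced sequence of edge colors is a contiguous factor of $w^{(i)}$, and hence nonrepetitive.

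Given arbitrary $u,v\in V(G)$, I would form the walk $W$ obtained by following the unique $u$-to-$r$ path in $T_1$ and then the unique $r$-to-$v$ path in $T_2$. Since $T_1$ and $T_2$ are edge-disjoint, $W$ never repeats an edge, but it may revisit vertices. Let $x$ be the first vertex on $W$ (starting from $u$) that lies on the $r$-to-$v$ path in $T_2$, and let $P$ be the portion of $W$ from $u$ to $x$ in $T_1$ followed by the portion of $W$ from $x$ to $v$ in $T_2$. The minimality of $x$ ensures that no vertex of the first half of $P$, other than $x$ itself, lies on the $T_2$-path, so the two halves share only the vertex $x$ and $P$ is a simple $u$-$v$ path. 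Its color sequence is the concatenation of a reversed contiguous factor of $w^{(1)}$ with a contiguous factor of $w^{(2)}$; each half is nonrepetitive by Thue. A hypothetical repetition $XX$ entirely inside one half contradicts nonrepetitivity of that factor, while a repetition straddling the boundary would place the second copy of $X$ entirely in the second half, using colors only from $\{4,5,6\}$, whereas the first copy must contain at least one color from $\{1,2,3\}$ coming from the first half—an impossibility since $X=X$. A one-letter repetition across the boundary is excluded because the last color of the first half lies in $\{1,2,3\}$ while the first color of the second half lies in $\{4,5,6\}$.

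The main obstacle I expect is that the natural ``up in $T_1$, then down in $T_2$'' construction produces a walk rather than a simple path, and arbitrary shortcutting would generally destroy the nonrepetitive structure of the color sequence. Choosing $x$ to be the first intersection resolves this cleanly: both halves of the resulting shortcut remain ancestor--descendant paths in their respective trees, so each half is still a single Thue factor, and the disjoint-palette argument forbids any repetition across the boundary. No other case analysis is required, because the two disjoint palettes do the work of separating the two halves of every connecting path.
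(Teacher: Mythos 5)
Your proposal is correct and follows essentially the same route as the paper: two disjoint three-color palettes, Thue sequences indexed by depth on each tree, and a connecting path spliced from an ancestor path in $T_1$ and a descendant path in $T_2$. The only cosmetic difference is your choice of the splice vertex $x$ (first intersection along the walk, versus the paper's common vertex farthest from the root), and both choices yield the same kind of simple path whose nonrepetitivity follows from the disjointness of the palettes.
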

\begin{proof}
	Let $T_1$ and $T_2$ be two spanning trees of $G$ such that $E(T_1)\cap E(T_2)=\emptyset$. Let $r$ be a common root of these trees. Let $E_i(T_1)$ be the set of edges at distance $i$ from the root $r$. So, $E_0(T_1)$ consists of the edges of $T_1$ incident to $r$, $E_1(T_1)$ contains the edges of $T_1$ incident to the neighbors of $r$, and so on. By the theorem of Thue \cite{Thue}, there exists a nonrepetitive sequence $a_0a_1a_2\cdots$ of arbitrary length such that $a_i\in\{1,2,3\}$. We may color the edges of the tree $T_1$ using this sequence so that each edge in the set $E_i(T_1)$ gets color $a_i$. The same construction may be applied to the tree $T_2$, with similarly defined sets $E_i(T_2)$, and sufficiently long nonrepetitive sequence $b_0b_1b_2\cdots$, with $b_i\in \{4,5,6\}$. All other edges of $G$ may be colored arbitrarily.
	
	We claim that this coloring satisfies the desired property. Indeed, let $u,v$ be any two vertices of $G$. Denote by $P_j(x,y)$, $j=1,2$, the unique path from $x$ to $y$ in the tree $T_i$. Consider the path $P_1(u,r)$. Clearly, it is nonrepetitive by the construction of the coloring. If $v$ lies on $P_1(u,r)$, then the sub-path $P_1(u,v)$ is nonrepetitive, too, and we are done. So, assume that $v$ lies outside $P_1(u,r)$ and consider the path $P_2(r,v)$. If the only common vertex of these two paths is $r$, then we may glue them together into a longer path $P_1(u,r)P_2(r,v)$, which is clearly nonrepetitive, as the sets of colors on both fragments are disjoint. 
	
	Finally suppose that the two paths, $P_1(u,r)$ and $P_2(r,v)$, have some common vertices other than the root $r$ and let $x$ be the one with the largest distance from $r$ (in the tree $T_1$, say). Then the two sub-paths $P_1(u,x)$ and $P_2(x,v)$ intersect in only one vertex $x$ and, as before, we may glue them together to get the nonrepetitive path $P_1(u,x)P_2(x,v)$. This completes the proof.
\end{proof}

To get the second assertion of Theorem \ref{Theorem Thue} it suffices to apply the following simple fact following easily from the celebrated theorem of Nash-Williams \cite{Nash-Williams} (see Corollary 44 in \cite{Ozeki-Yamashita}).

\begin{theorem}[Nash-Williams \cite{Nash-Williams}]\label{Nash-Williams}
	Every $2k$-edge-connected graph contains $k$ edge-disjoint spanning trees.
\end{theorem}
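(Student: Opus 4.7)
The plan is to deduce this corollary from the classical \emph{partition form} of the Nash-Williams theorem, which asserts that a graph $G$ admits $k$ edge-disjoint spanning trees if and only if, for every partition $\mathcal{P}=\{V_1,\ldots,V_r\}$ of $V(G)$ into nonempty classes, the number $e_G(\mathcal{P})$ of edges of $G$ with endpoints in two distinct classes satisfies $e_G(\mathcal{P})\geq k(r-1)$. With this statement in hand, the remaining work is a single edge count using the hypothesis that $G$ is $2k$-edge-connected.

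First I would fix an arbitrary partition $\mathcal{P}=\{V_1,\ldots,V_r\}$ of $V(G)$ into $r$ nonempty parts; the case $r=1$ is vacuous, so I may assume $r\geq 2$. For each $i$, the set $V_i$ is a proper nonempty subset of $V(G)$, hence the edge cut $\partial_G(V_i)$ separating $V_i$ from its complement contains at least $2k$ edges, by the edge-connectivity assumption.

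Next I would sum these lower bounds over all $r$ classes. Every edge that crosses the partition has its two endpoints in two distinct classes and is therefore counted in exactly two of the cuts $\partial_G(V_i)$, giving
\[
2\, e_G(\mathcal{P}) \;=\; \sum_{i=1}^{r}\, |\partial_G(V_i)| \;\geq\; 2kr,
\]
and so $e_G(\mathcal{P})\geq kr\geq k(r-1)$. Thus the Nash-Williams partition inequality is satisfied for every partition of $V(G)$, and the partition form of the theorem immediately produces $k$ edge-disjoint spanning trees in $G$.

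The main obstacle, if any, is simply invoking the correct formulation of the Nash-Williams theorem; once that is admitted, the argument collapses to the short double-counting step above, and no further combinatorial work is needed.
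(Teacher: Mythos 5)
Your argument is correct, and it is the standard derivation. A remark on how it relates to the paper: the paper does not prove this statement at all — it is quoted as a known result of Nash-Williams, with a pointer to the survey of Ozeki and Yamashita, and the text only says that it ``follows easily from the celebrated theorem of Nash-Williams,'' meaning the tree-packing theorem of Nash-Williams and Tutte in its partition form. Your proposal supplies exactly that missing easy deduction: you invoke the partition criterion ($k$ edge-disjoint spanning trees exist if and only if every partition of $V(G)$ into $r$ classes is crossed by at least $k(r-1)$ edges) and verify it by double counting, using that each class $V_i$ is a proper nonempty subset of $V(G)$, so $|\partial_G(V_i)|\geq 2k$, and that each crossing edge lies in exactly two of the cuts $\partial_G(V_i)$, whence $e_G(\mathcal{P})\geq kr\geq k(r-1)$. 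All steps check out, including the dismissal of the trivial partition $r=1$; the only thing you are taking as given is the partition form of the theorem itself, which is precisely the ``celebrated theorem'' the paper also takes as given, so the logical content matches what the authors intended.
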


Indeed, it is enough to take $k=2$ and notice that a $4$-edge-connected graph is all the more $4$-(vertex)-connected.

For the second bound for $2$-connected graphs we apply a similar approach with a silghtly weaker property based on edge independent trees. Recall that two spanning trees in a graph $G$, $T_1$ and $T_2$, having the same root $r$, are \emph{edge-independent} if for every vertex $v$, the unique paths $P_1(v,r)$ in $T_1$ and $P_2(v,r)$ in $T_2$ are edge disjoint.
		\begin{theorem}\label{Theorem Spanning Trees 2}
		Let $G$ be a graph containing two edge-independent spanning trees. Then $\nrc(G)\leq 15$.
	\end{theorem}
\begin{proof}
	Let $T_1$ and $T_2$ be two edge independent spanning trees of $G$. We will construct a similar coloring as in the proof of Theorem \ref{Theorem Spanning Trees}, but notice that this time the sets of edges $E(T_1)$ and $E(T_2)$ need not be disjoint. Therefore we will color the edges of $G$ by ordered pairs of colors whose first coordinates are controlled by an appropriate coloring of $T_1$ while second coordinates are determined by an analogous coloring of $T_2$.
	
	So, let $r$ be a common root of trees $T_1$ and $T_2$. Let $E_i(T_1)$ be the set of edges at distance $i$ from the root $r$. So, $E_0(T_1)$ consists of the edges of $T_1$ incident to $r$, $E_1(T_1)$ contains the edges of $T_1$ incident to the neighbors of $r$, and so on. By Thue's theorem \cite{Thue}, there exists a nonrepetitive sequences, $a_0a_1a_2\cdots$ and $b_0b_1b_2\cdots$, of arbitrary length such that $a_i\in\{1,2,3\}$ and $b_i\in \{4,5,6\}$. We may color the edges of trees $T_1$ and $T_2$ using these sequences so that each edge in the set $E_i(T_1)$ gets color $a_i$ and each edge in $E_i(T_2)$ gets color $b_i$. Now, if an edge $e$ belongs to both trees, then its final color is an ordered pair of colors $(a_i,b_j)$. In this way we get a partial coloring of $G$ using at most $9+6=15$ colors. The rest of the edges of $G$, not belonging to trees $T_i$, may be colored by these colors arbitrarily.
	
	We claim that this coloring satisfies the desired property. Indeed, let $u,v$ be any two vertices of $G$. Denote by $P_j(x,y)$, $j=1,2$, the unique path from $x$ to $y$ in the tree $T_i$. Consider the path $P_1(u,r)$. Clearly, it is nonrepetitive by the construction of the coloring. If $v$ lies on $P_1(u,r)$, then the sub-path $P_1(u,v)$ is nonrepetitive, too, and we are done. So, assume that $v$ lies outside $P_1(u,r)$ and consider the path $P_2(r,v)$. Let $x$ be a common vertex of these two paths, $P_1(u,r)$ and $P_2(r,v)$, such that the two sub-paths, $P_1(u,x)$ and $P_2(x,v)$ intersect only in $x$. Let $e_u$ and $e_v$ denote the last edges of these two sub-paths, respectively (so their coomon end is $x$). Now, it is not hard to verify that, by the assumption of the edge-independence of trees $T_i$, each of these two edges belong to only one tree, namely $e_u$ to $T_1$ and $e_v$ to $T_2$. Consequently, the color of $e_u$, which is some $a_i$, cannot occur at the path $P_2(v,x)$. It follows that the path $P_1(u,x)P_2(x,v)$ is nonrepetitive. This completes the proof.
\end{proof}
	It is conjectured that every $k$-edge-connected graph contains $k$ edge-independent spanning trees with an arbitrarily choosen common root $r$ (see \cite{Ozeki-Yamashita}). To get the first part of Theorem \ref{Theorem Thue} it suffices to invoke the results of Itai and Rodeh \cite{Itai-Rodeh} and Khuller and Scheiber \cite{Khuller-Scheiber} confirming this conjecture for $k=2$ (see \cite{Ozeki-Yamashita}).

	\section{Final remarks}
	Let us conclude the paper with some natural open problems. The first one is very concrete and asks for the optimum value of the strongly proper connection number $\spc(G)$ in the class of $2$-connected graphs.
	
	\begin{problem}
		Determine the least possible number $k$ such that every $2$-connected graph $G$ satisfies $\spc(G)\leq k$. 
	\end{problem} 
	We know that $k=4$ or $5$, but which is the correct value? It would be also nice to know what happens for graphs with higher connectivity. For instance, it is known (see \cite{Borozan-FGMM-Tuza}, \cite{Li-M-Qin}) that $\pc(G)=2$ holds already for $3$-connected graphs. Also, our graphs $G_d$ from the proof of Theorem \ref{theorem_4colors} are not $3$-connected. This prompts us to formulate the following conjecture.  
	
	\begin{conjecture}
		Every $3$-connected graph $G$ satisfies $\spc(G)\leq3$. 
	\end{conjecture}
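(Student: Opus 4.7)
The plan is to adapt the strategy of Theorem \ref{theorem_ps_3_cycles}, which already achieves $\spc(G)\leq 3$ whenever all cycle lengths are divisible by $3$. In that proof the coloring is built by orienting the graph via an ear decomposition and assigning colors so that every directed path follows the canonical periodic pattern $(1,2,3,1,2,3,\ldots)$. This is only possible because every cycle has length divisible by $3$. For arbitrary $3$-connected graphs the central obstruction is that cycles can have any length, so a rigid canonical pattern cannot be imposed globally.

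First I would replace the open ear decomposition with a \emph{non-separating} ear decomposition, available by Tutte's theorem for $3$-connected graphs: each ear $P_i$ may be chosen so that $G_i - V(\mathrm{int}(P_i))$ remains connected. I would also process ears in order of decreasing length, analogously to Claim \ref{claim_ears}, and refine the structural claims (analogues of Claims \ref{claim_endpoints_of_ears}--\ref{claim_touching_ears_different_number_of_edges}) to bound how short ears can attach to longer ears under $3$-connectivity. I expect that $3$-connectivity forces every sufficiently short ear to have its endpoints ``well separated'' in the already-constructed subgraph, providing the slack needed to save two colors compared with Theorem \ref{theorem_ps2connected}.

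Next I would attempt to maintain an inductive invariant modeled on Claim \ref{claim_3_cycles_coloringOfD}: an orientation $D_i$ in which each vertex has a single outgoing color and a single incoming color, and every directed path exhibits the canonical pattern. When the new ear $P_i$ has length not matching modulo $3$ the canonical distance between $s_i$ and $t_i$ in $D_{i-1}$, I would use the third internally vertex-disjoint $s_i$--$t_i$ path guaranteed by Menger's theorem to ``reorient'' along a non-separating subpath, shifting the residues of affected vertices by $\pm 1 \pmod 3$ without destroying already-established canonical patterns. The crux is that non-separating ears allow such local reorientations to propagate harmlessly, since the remainder of the graph reaches the affected region through a single ``interface'' that can be recalibrated consistently.

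The main obstacle will be precisely this reorientation step: showing that the residue classes of vertices in $\mathbb{Z}/3$ can always be adjusted to accommodate a new ear, without creating a conflict elsewhere. This is essentially the question of whether a certain $\mathbb{Z}/3$-valued ``potential'' can be defined on $V(G)$ so that every oriented edge raises the potential by $1$; in general this requires every oriented cycle to have length divisible by $3$, which is false. I therefore expect that a pure canonical-pattern approach cannot succeed, and that one must relax the invariant: allow local deviations from the canonical pattern on short ears, and prove that for every pair $u,v$ \emph{some} (not every) $u$--$v$ path remains strongly proper, using the abundance of internally disjoint paths in $3$-connected graphs as the safety net. Making this flexibility precise, and verifying it survives the inductive addition of all ears, is where the genuine difficulty of the conjecture lies.
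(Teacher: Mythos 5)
This statement is an open conjecture in the paper, not a theorem: the authors explicitly state that they do not know whether $\spc(G)\leq 3$ holds even for graphs of arbitrarily high connectivity, and they prove only the weaker bounds $\spc(G)\leq 5$ for $2$-connected graphs and $\spc(G)\leq 3$ under the extra hypothesis that all cycle lengths are divisible by $3$. So there is no proof in the paper to compare yours against, and what you have written is a research plan rather than a proof.

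The genuine gap is the one you yourself identify in your final paragraph. The canonical-pattern method of Theorem \ref{theorem_ps_3_cycles} works precisely because a $\mathbb{Z}/3$-valued potential exists on the vertices (every directed edge raises it by $1$), and this requires every cycle length to be divisible by $3$. In a general $3$-connected graph no such potential exists, and your proposed fix --- ``reorienting'' along a third Menger path to shift residues by $\pm 1 \pmod 3$ --- is not an argument: you give no mechanism by which the shifted residues remain consistent on the rest of $D_{i-1}$, and any single cycle of length $\not\equiv 0 \pmod 3$ already makes a globally consistent residue assignment impossible, no matter how the ears are chosen or reordered. Your fallback (``allow local deviations and show that \emph{some} $u$--$v$ path survives'') is exactly the hard content of the conjecture, restated rather than proved. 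Note also that the paper's own Theorem \ref{theorem_4colors} exhibits $2$-connected graphs needing $4$ colors, so any proof of the conjecture must use $3$-connectivity in an essential, quantitative way; your sketch does not yet extract any concrete consequence of $3$-connectivity beyond the existence of a third disjoint path, and it is not shown how that path helps color anything.
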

	
	Let us stress however that we do not even know if the above inequality holds for graphs with any sufficiently high connectivity.
	
	It would be also nice to know more on nonrepetitive connected coloring and the corresponding parameter $\nrc(G)$.
	
	\begin{problem}
		Determine the least possible number $t$ such that every $2$-connected graph $G$ satisfies $\nrc(G)\leq t$. 
	\end{problem}
	
	By Theorem \ref{Theorem Spanning Trees 2} we know that $t\in \{3,4,\ldots, 15\}$. The problem seems challenging even if restricted to some classes of graphs. Consider, for instance, nonrepetitive connected coloring of \emph{planar} graphs. Barnette \cite{Barnette} proved that every $3$-connected planar graph contains a spanning tree of maximum degree at most three (see \cite{Ozeki-Yamashita}). Using a general upper bound form \cite{Alon-GH-Riordan} one gets that for such graphs we have $\nrc(G)\leq8$. Also, since $4$-connected planar graphs are Hamiltonian, as proved by Tutte \cite{Tutte}, they satisfy the best possible bound $\nrc(G)\leq3$.
	
	As mentioned in the introduction, one may consider fairly general \emph{$\mathcal{P}$-connected colorings}, where $\mathcal{P}$ is any property of sequences. The minimum number of colors needed for such a coloring of $G$, with fixed property $\mathcal{P}$, is denoted by $\Pc(G)$. A property $\mathcal{P}$ is called \emph{honest} if it possess the following basic features:
\begin{enumerate}
	\item [(i)] If a sequence $S$ has property $\mathcal{P}$, then each nonempty block of $S$ also satisfies $\mathcal{P}$.
	\item[(ii)] If $S$ and $T$ are two sequences over disjoint alphabets (color sets) satisfying $\mathcal{P}$, then their concatenation $ST$ also satisfies $\mathcal{P}$.
	\item[(iii)] There exist arbitrarily long sequences over some finite alphabet satisfying property $\mathcal{P}$.
\end{enumerate}
	
 Let us denote by $m(\mathcal{P})$ the least possible constant in condition (iii). For instance, if $\mathcal{P}$ corresponds to strong coloring or nonrepetitive coloring, then $m(\mathcal{P})=3$, while if $\mathcal{P}$ stems from the usual proper coloring, then $m(\mathcal{P})=2$.
 
 It is now easy to see that repeating the proof of Theorem \ref{Theorem Thue} gives the following general result.
	
	\begin{theorem}\label{Theorem P-connected}
		Let $\mathcal{P}$ be any honest property of sequences. If $G$ is a graph containing two edges disjoint spanning trees, then $\Pc(G)\leq 2m(\mathcal{P})$. In particular, every $4$-connected graph $G$ satisfies $\Pc(G)\leq 2m(\mathcal{P})$.
	\end{theorem}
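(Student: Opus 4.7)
The plan is to replay the proof of Theorem \ref{Theorem Thue} with the Thue sequence replaced by any $\mathcal{P}$-satisfying sequence on $m(\mathcal{P})$ letters, being careful about reading direction so that only the bare hypotheses (i)--(iii) of honesty are needed.

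Let $T_1,T_2$ be two edge-disjoint spanning trees of $G$ sharing a common root $r$, and let $D_j$ denote the depth of $T_j$. By (iii), I would fix sequences $a_0 a_1 \cdots a_{D_1}$ and $b_0 b_1 \cdots b_{D_2}$ satisfying $\mathcal{P}$ over the disjoint alphabets $\{1,\ldots,m(\mathcal{P})\}$ and $\{m(\mathcal{P})+1,\ldots,2m(\mathcal{P})\}$ respectively (relabeling letters clearly preserves $\mathcal{P}$, so disjointness is free). Then I would color each edge of $T_1$ at distance $i$ from $r$ by $a_{D_1-i}$, each edge of $T_2$ at distance $i$ from $r$ by $b_i$ (the \emph{reversed} indexing on $T_1$ is the key twist), and all remaining edges of $G$ arbitrarily.

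To produce a $\mathcal{P}$-path between given $u,v\in V(G)$, I would then run exactly the case analysis of Theorem \ref{Theorem Thue}. Let $x$ be the vertex of $P_1(u,r)\cap P_2(r,v)$ farthest from $r$ (possibly $x=r$, or $x\in\{u,v\}$ in degenerate cases), and consider the simple path $P=P_1(u,x)\cdot P_2(x,v)$ oriented from $u$ to $v$. Thanks to the reversed indexing on $T_1$, the first half of $P$ produces the colors $a_{D_1-\dist(u,r)+1},\ldots,a_{D_1-\dist(x,r)}$, i.e.\ a block of $a_0 a_1\cdots a_{D_1}$ in increasing index order; the second half produces the block $b_{\dist(x,r)},\ldots,b_{\dist(v,r)-1}$ of $b_0 b_1\cdots b_{D_2}$, also in increasing index order. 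By (i), each half satisfies $\mathcal{P}$; since the two halves use disjoint alphabets, (ii) implies that the concatenation $P$ satisfies $\mathcal{P}$. Degenerate cases (where $x\in\{u,v\}$) collapse one half to an empty block and the conclusion follows from (i) alone.

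The ``in particular'' clause is then immediate: any $4$-connected graph is $4$-edge-connected, so by Theorem \ref{Nash-Williams} with $k=2$ it contains two edge-disjoint spanning trees, and the first part applies. The only non-routine move is the reversed indexing on $T_1$: it ensures that the combined path $u\to x\to v$ can be read in a single direction with both halves genuine blocks (rather than reversed blocks), so no direction-invariance assumption on $\mathcal{P}$ is needed and the three honesty axioms suffice exactly. No step here appears to pose a serious obstacle beyond this setup.
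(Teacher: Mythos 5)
Your proof is correct and follows exactly the route the paper intends: the paper's entire argument for Theorem~\ref{Theorem P-connected} is the remark that one ``repeats the proof of Theorem~\ref{Theorem Thue}'', i.e.\ takes two edge-disjoint spanning trees with a common root (via Nash--Williams in the $4$-connected case), colors each tree by depth using a $\mathcal{P}$-sequence over its own $m(\mathcal{P})$-letter alphabet, and glues the two tree paths at the deepest common vertex, using (i) for the two blocks and (ii) for their concatenation. Your one genuine addition---reversing the depth indexing on $T_1$ so that both halves of the $u$--$v$ path are read as \emph{forward} blocks---is a worthwhile refinement rather than a departure: with the paper's literal coloring one of the two halves is a reversed block, which is harmless for the reversal-invariant properties the paper has in mind (nonrepetitive, strongly proper) but is not guaranteed by axioms (i)--(iii) alone, so your version is in fact the more careful one.
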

	
One naturally wonders if the above statement could be true for $2$-connected graphs (or at least for $3$-connected graphs), possibly with some larger upper bound.

\begin{conjecture}
		Let $\mathcal{P}$ be any honest property of sequences. Then there exists a constant $t(\mathcal{P})$ such that every $2$-connected graph $G$ satisfies $\Pc(G)\leq t(\mathcal{P})$. 
\end{conjecture}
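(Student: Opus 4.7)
The plan is to extend the two-alphabet construction of Theorem \ref{Theorem Spanning Trees} from edge-disjoint spanning trees to a general ear decomposition. First I would pass to a minimally $2$-connected spanning subgraph $H$ of $G$ (edges of $G$ outside $H$ can be colored arbitrarily) and fix an open ear decomposition $P_1,P_2,\ldots,P_h$ of $H$ from Theorem \ref{theorem_whitney}. The key idea is to introduce $c$ pairwise disjoint alphabets $A_1,\ldots,A_c$, each of size $m(\mathcal{P})$, and to assign one alphabet to every ear (cutting $P_1$ at an arbitrary vertex into two arcs that receive two different alphabets) according to a rule to be specified below.

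By property (iii) each ear can then be edge-colored as a $\mathcal{P}$-sequence over its assigned alphabet, and by property (i) every sub-path of a single ear inherits property $\mathcal{P}$. The rule should be chosen so that for every pair $u,v\in V(H)$ there is a $u$--$v$ path consisting of at most $c$ sub-paths, each contained in a single ear, whose alphabets are pairwise disjoint along the route; iterated application of property (ii) would then concatenate these pieces into a single $\mathcal{P}$-sequence and give $\Pc(G)\leq c\cdot m(\mathcal{P})=:t(\mathcal{P})$. In the spirit of the proof of Theorem \ref{theorem_ps2connected}, I would try to produce such a rule by processing ears longest-first, grouping them into a bounded number of \emph{types} according to their attachment pattern, and adding a rerouting argument that confines connecting paths to a bounded number of types.

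The main obstacle, and precisely why the statement is left only as a conjecture, is exactly this structural step. In a general $2$-connected graph an ear may attach internally to many different earlier ears, and a $u$--$v$ path in $H$ may have to traverse a long succession of ears, so even the modest demand that ears sharing a vertex receive distinct alphabets can cascade into an unbounded palette. Overcoming this seems to require a genuinely new decomposition lemma for $2$-connected graphs---for instance, a spanning sub-decomposition of bounded ``branching depth''---or an alternative routing device that exploits property (ii) in a more global fashion, neither of which appears to follow from the tools currently at hand.
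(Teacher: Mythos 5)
This statement is left as an open conjecture in the paper; there is no proof of it anywhere in the text, and your proposal does not close that gap either. What you have written is a plan whose decisive step --- the rule assigning one of a \emph{bounded} number $c$ of disjoint alphabets to the ears so that every pair of vertices is joined by a path decomposing into at most $c$ ear-segments with pairwise distinct alphabets --- is never constructed, and you say so yourself in your final paragraph. Without that rule no constant $t(\mathcal{P})=c\cdot m(\mathcal{P})$ is ever produced, so the argument proves nothing. To your credit, you have correctly located where the difficulty lies: the paper's own evidence (Theorem \ref{Theorem Spanning Trees} and Theorem \ref{Theorem P-connected}) only handles graphs with two edge-disjoint spanning trees, precisely because there the two alphabets can be separated globally and property (ii) applied once at a single gluing vertex; an ear decomposition of a general $2$-connected graph offers no analogous global separation, and a connecting path may be forced through unboundedly many ears. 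One further caution about your sketch: property (ii) as stated only licenses concatenating \emph{two} $\mathcal{P}$-sequences over disjoint alphabets, and it is not asserted to be associative or iterable for three or more blocks (the concatenation $ST$ need not itself live over an alphabet disjoint from a third sequence's in any useful inductive sense), so even the ``iterated application of property (ii)'' step would need justification beyond what the honesty axioms provide. The statement remains open, and your text should be read as a discussion of why, not as a proof.
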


By the proof of Theorem \ref{Theorem Spanning Trees 2} we know that it is true if an honest property satisfies additionally the following property:
\begin{itemize}
	\item [(ii)'] If $S$ and $T$ are two sequences satisfying $\mathcal{P}$ such that the last term of $S$ does not occur in $T$ and the first term of $T$ does not apper in $S$, then their concatenation $ST$ also satisfies $\mathcal{P}$.
\end{itemize}
 
 One also naturally wonders if the minimum possible number of colors in a $\mathcal{P}$-connected coloring can be achieved at the expense of increasing connectivity.
 
  \begin{conjecture}
  	Let $\mathcal{P}$ be any honest property of sequences. Then there exists a constant $c(\mathcal{P})$ such that every $c(\mathcal{P})$-connected graph $G$ satisfies $\Pc(G)\leq m(\mathcal{P})$. 
  \end{conjecture}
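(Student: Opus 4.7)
The plan is to lift the ear-decomposition strategy of the proofs of Theorems~\ref{theorem_ps_3_cycles} and~\ref{theorem_ps2connected} to arbitrary honest properties, trading low connectivity for a large connectivity constant $c(\mathcal{P})$ in order to gain enough structural freedom to reduce the palette down to $m(\mathcal{P})$ colors. Concretely, I would try to choose $c(\mathcal{P})$ large enough that every $c(\mathcal{P})$-connected graph $G$ has a $2$-connected spanning subgraph $H$ admitting an open ear decomposition $(P_1,\dots,P_h)$ in which every ear has length at least $L(\mathcal{P})$ and the ear lengths can be prescribed modulo an integer $N(\mathcal{P})$ depending only on $\mathcal{P}$. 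The structural step would use Menger's theorem: in a highly connected graph, for each short ear one can fabricate a long replacement ear using internally disjoint detours, and one can adjust the length modulo $N(\mathcal{P})$ by inserting or deleting such detours.

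Given such a decomposition and an infinite word $\mathbf{a}=a_0a_1a_2\cdots$ over $\{1,\dots,m(\mathcal{P})\}$ satisfying $\mathcal{P}$ (which exists by condition (iii) of honesty), I would build the coloring by induction on ears, maintaining invariants analogous to (i)--(iii) of Claims~\ref{claim_3_cycles_coloringOfD} and~\ref{claim_coloringOfD}: each vertex has a single ``out-color'' and a single ``in-color,'' and every directed path in the current oriented subgraph reads a contiguous block of $\mathbf{a}$. When a new ear $P_i$ is added between $s_i$ and $t_i$, the forced colors at the endpoints and the length of $P_i$ together specify a block of $\mathbf{a}$ to be realised along $P_i$; honesty (i) guarantees that this block again satisfies $\mathcal{P}$, and honesty (ii) is invoked to combine paths through $P_i$ with paths in $H_{i-1}$ when they use essentially disjoint alphabets.

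The main obstacle is the inductive coloring step, specifically the claim that for every prescribed pair of boundary colors and every admissible length, a matching block occurs in $\mathbf{a}$ within bounded distance. For the specific properties handled in the paper this is trivial: the periodic word $123123\cdots$ realises every profile on cycles of length divisible by $3$ (Theorem~\ref{theorem_ps_3_cycles}), and Thue words are robust enough to splice in Theorem~\ref{Theorem Thue}. For a general honest $\mathcal{P}$, however, such a ``recurrence lemma'' does not follow from the three honesty axioms alone --- one can cook up honest $\mathcal{P}$ whose minimal infinite words have profiles occurring only sporadically, so that no $L$ and $N$ suffice. Therefore I expect that proving the conjecture as stated will require either strengthening the honesty conditions (for instance, imposing a recurrence or uniform-recurrence axiom on the minimal words for $\mathcal{P}$) or a genuinely new structural theorem producing spanning subgraphs with a much finer ear decomposition than what Whitney and Menger alone provide.
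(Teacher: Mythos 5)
This statement is one of the paper's open conjectures in the final section; the paper offers no proof of it, so there is nothing to compare your argument against except the partial result the authors do prove, namely Theorem~\ref{Theorem P-connected}, which achieves only $2m(\mathcal{P})$ colors for graphs with two edge-disjoint spanning trees. Your proposal is not a proof, and you correctly say so yourself: the inductive coloring step requires that, for any prescribed boundary colors and any admissible length modulo $N(\mathcal{P})$, a matching block occurs in some infinite $\mathcal{P}$-word over $m(\mathcal{P})$ letters, and this does not follow from honesty axioms (i)--(iii). This is exactly the obstruction that forces the paper to double the palette: axiom (ii) only licenses concatenation of $\mathcal{P}$-words over \emph{disjoint} alphabets, so the authors glue two trees colored with disjoint color sets and pay $2m(\mathcal{P})$; splicing two blocks of the \emph{same} $m(\mathcal{P})$-letter word, which your plan needs at every ear attachment, is simply not guaranteed by the axioms (a property $\mathcal{P}$ can have a unique minimal infinite word in which the required factor transitions never occur, e.g.\ a property whose only words over a minimal alphabet are prefixes of one rigid sequence).

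Your diagnosis that one would need either a recurrence-type strengthening of honesty or a new structural theorem is sound, and your proposed use of Menger's theorem to lengthen and adjust ears modulo $N(\mathcal{P})$ is a reasonable direction for the structural half. But as written the proposal establishes nothing beyond what Theorem~\ref{Theorem P-connected} already gives, and the central analytic lemma it rests on is false for general honest $\mathcal{P}$, so the conjecture remains open.
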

 
 \section{Declarations}
 \begin{enumerate}
 	\item On behalf of all authors, the corresponding author states that there is no conflict of interest.
 	
 	\item Data sharing is not applicable to this article as no datasets were generated or analyzed during the current study.
 \end{enumerate}

\end{document}